\def\L {{\mathcal L }}
\def\E {{\mathcal E}}
\def\H {{\mathcal H}}
\def\Q {{\mathcal Q}}
\def\M {{\mathcal M}}
\def\A {{\mathfrak A}}
\def\B {{\mathcal B}}
\def\DD {\mathbb{D}}
\def\R {\mathbb{R}}
\def\N {\mathbb{N}}
\def\D {{\mathfrak D}}
\def\I {{\mathfrak I}}
\def\HH{{\rm H}}
\def\eps{\varepsilon}
\def\e{{\rm e}}
\def\d{{\rm d}}
\def\ddt{\frac{\d}{\d t}}
\def\ddx{\frac{\d}{\d x}}
\def \l {\langle}
\def \r {\rangle}
\def \norm {\boldsymbol{|}}
\def \and {{\qquad\text{and}\qquad}}
\newtheorem{proposition}{Proposition}[section]
\newtheorem{theorem}[proposition]{Theorem}
\newtheorem{lemma}[proposition]{Lemma}
\theoremstyle{definition}
\newtheorem{definition}[proposition]{Definition}
\newtheorem{remark}[proposition]{Remark}
\numberwithin{equation}{section}
\def \au {\rm}
\def \ti {\it}
\def \jou {\rm}
\def \bk {\it}
\def \no#1#2#3 {{\bf #1} (#3), #2.}
\def \eds#1#2#3 {#1, #2, #3.}
\title[Benjamin-Bona-Mahony equation with memory]
{Global attractors for the\\
Benjamin-Bona-Mahony equation with memory}
\author[F. Dell'Oro, O. Goubet, Y. Mammeri and V. Pata]
{Filippo Dell'Oro, Olivier Goubet, Youcef Mammeri and Vittorino Pata}
\address{Politecnico di Milano - Dipartimento di Matematica
\newline\indent
Via Bonardi 9, 20133 Milano, Italy}
\email{filippo.delloro@polimi.it {\rm (F. Dell'Oro)}}
\email{vittorino.pata@polimi.it {\rm (V. Pata)}}
\address{Laboratoire Ami\'enois de Math\'ematique Fondamentale et Appliqu\'ee
\newline\indent
CNRS UMR 7352, Universit\'e de Picardie Jules Verne, 80069 Amiens, France}
\email{olivier.goubet@u-picardie.fr {\rm (O. Goubet)}}
\email{youcef.mammeri@u-picardie.fr {\rm (Y. Mammeri)}}
\subjclass[2000]{35B41, 35F25, 45K05}
\keywords{Benjamin-Bona-Mahony equation, dissipative memory, global attractors}
\begin{document}

\begin{abstract}
We consider the nonlinear integrodifferential Benjamin-Bona-Mahony equation
$$
u_t - u_{txx} + u_x - \int_0^\infty g(s) u_{xx}(t-s) \d s + u u_x = f
$$
where the dissipation is entirely contributed by the memory term.
Under a suitable smallness assumption on the external force $f$,
we show that the
related solution semigroup possesses the global attractor
in the natural weak energy space. The result is obtained by means of a nonstandard
approach based on the construction of a suitable family of attractors on certain invariant
sets of the phase space.
\end{abstract}

\maketitle

%%%%%%%%%%%%%%%%%%%%%%%%%%%%%%%%%%%%%%%%%%%%%%%%%

%%%%%%%%%%%%%%%%%%%%%%%%%%%%%%%%%%%%%%%%%%%%%%%%%
\section{Introduction}

\noindent
In 1972, Benjamin, Bona and Mahony~\cite{BBM} introduced the nonlinear equation (from now on called BBM)
\begin{equation}
\label{BBMeq}
u_t - u_{txx} + u_x + u u_x =f,
\end{equation}
in the unknown variable $u=u(x,t):\I\times \R^+ \to \R$, where
$$\I=(a,b)$$
is a (bounded) interval of the real line.
The equation is
supplemented with the Dirichlet boundary condition
\begin{equation}
\label{BC}
u(a,t) = u(b,t) = 0.
\end{equation} From the physical viewpoint, $u$ represents
the one-directional amplitude of long waves
in shallow water, whereas $f\in L^2(\I)$ is a time-independent external force.
It is interesting to note that in the homogeneous case, i.e.\ when $f=0$, the natural energy
$$\E(t)=\|u(t)\|^2+\|u_x(t)\|^2$$
is a conserved quantity, where $\|\cdot\|$ denotes the norm in $L^2(\I)$.
This can be easily verified multiplying the equation by $u$, taking into account the
boundary condition \eqref{BC}.

As a matter of fact, \eqref{BBMeq} is obtained from the Korteweg-de Vries equation \cite{KDV}
\begin{equation}
\label{KdV}
u_t + u_{xxx} + u_x + u u_x =f,
\end{equation}
merely by replacing the term $u_{xxx}$ by $-u_{txx}$. To some extent,
equation~\eqref{BBMeq} can be seen as a regularized version
of \eqref{KdV}.
In the dissipative case, that is,
in the modeling of long gravity waves where the viscosity $\nu>0$ of the fluid is not neglected
(see e.g.\ \cite{BYATT,Dut,KAKUTANI}), the BBM equation displays the extra term
$- \nu u_{xx}$.
Accordingly, taking $\nu=1$ for simplicity, the dissipative version of~\eqref{BBMeq} reads
\begin{equation}
\label{dBBM}
u_t  - u_{txx}  + u_x - u_{xx} +  u u_x = f.
\end{equation}
The longterm dynamics of \eqref{dBBM} has been the object of several investigations. In particular,
the existence of a finite-dimensional
global attractor for the related solution semigroup has been proved by Wang and Yang \cite{WAN, WAYA}.
Other results can be found in \cite{ABS,BD,GUO,LARKIN,STA,UAUA} and references therein.

In the recent work \cite{DMP}, some of the authors of the present paper proposed a memory relaxation
of the dissipative BBM equation~\eqref{dBBM}. More precisely, they considered the integrodifferential problem
in the variable $u=u(x,t):\I\times \R \to \R$
\begin{equation}
\label{BBM}
u_t - u_{txx} + u_x - \int_0^\infty g(s) u_{xx}(t-s) \d s + u u_x = f,
\end{equation}
subject to the Dirichlet boundary condition~\eqref{BC}.
Here,
$g:[0,\infty)\to[0,\infty)$ is a convex summable
kernel of unitary total mass, and
the function $u$ is supposed to be known for all $t\leq0$, where it plays the role of an initial datum.
The delay induced by the presence of the memory destroys the parabolic character of the
BBM equation~\eqref{dBBM},
providing a more realistic description of the phenomenon. In particular, it prevents
the unphysical feature of the instantaneous regularization of initial data
(see e.g.\ \cite{Fichera,GP} where the same issue is discussed in a different context).
It is also worth noting that the dissipative BBM model~\eqref{dBBM}
is formally recovered from \eqref{BBM} in the limiting situation
where $g$ collapses into the Dirac mass at zero.

Mathematically speaking, there are remarkable differences between
the dissipative equations \eqref{dBBM} and \eqref{BBM}.
In particular, in the homogeneous case $f=0$, the exponential stability of \eqref{dBBM}
is quite easy to prove:
the natural multiplication by $u$ immediately gives the differential identity
$$\ddt \E+2\|u_x\|^2=0,$$
and by means of the Poincar\'e inequality one readily obtains
$$\ddt \E+\varkappa \E\leq0,\quad\varkappa>0.$$
On the contrary, being entirely contributed by the memory term, the dissipation mechanics
of \eqref{BBM} is much weaker (and nonlocal), and
the basic energy identity alone is not sufficient to
provide the exponential decay of the solutions. Indeed, since the instantaneous damping no longer appears,
one needs
to introduce an auxiliary energy-like functional in order to reconstruct the missing term $\|u_x\|^2$.
On the other hand, when dealing
with such an auxiliary functional, the treatment  of the nonlinear
term (that cancels out when performing the basic energy estimate) becomes quite delicate.
Nonetheless it is still true that the (nonlinear) semigroup
generated by the homogeneous version of \eqref{BBM} is exponentially stable.
This is the content of the paper~\cite{DMP}, whose key idea was to exploit
in a crucial way the gradient-system structure of the problem,
together with a recursion argument.
Summarizing, similarly to what happens in the Navier-Stokes system (see e.g.\ \cite{temNS})
the asymptotic dynamics of \eqref{BBM} with $f=0$ is trivial, and all the complexity arises in presence
of the external force.

The purpose of this work is exactly the longterm analysis of the solutions to \eqref{BBM}
with a nonzero term $f$. The introduction of the external force
renders the picture much more complicated from the very beginning, since
the gradient-system structure is completely lost. In particular, the techniques of~\cite{DMP}
no longer apply.
Our main result is the existence of the (regular) global attractor for the solution
semigroup $S(t)$ generated by \eqref{BBM}, rewritten as a dynamical system in the so-called history
space framework of Dafermos~\cite{DAF}.
This can be done under a suitable smallness
assumption on $f$. The lack of the gradient-system structure,
combined with the extremely weak dissipation mechanism provided by the memory,
makes the problem highly nontrivial.
Our strategy here is to follow
a nonstandard approach, based on the construction of a family of attractors $\A_\eps$ on certain invariant
sets $\DD_\eps$ of the phase space. When $\eps\to 0$, the sets $\DD_\eps$ turn out to fill the space,
and the attractors $\A_\eps$ are shown to coincide. This allows us to conclude.

\subsection*{Plan of the paper}
In the next Section \ref{S2} we introduce the functional setting and the notation, while in the subsequent
Section \ref{S3} we establish the existence of the solution semigroup $S(t)$.
In Section~\ref{S4} we state the main result about the global attractor,
whose proof is carried out in the remaining Sections \ref{S5}-\ref{S11}.
In particular, Section \ref{S5} deals with two ODE lemmas needed in the course of the investigation, while
Sections \ref{S6}-\ref{S8} are devoted to the construction of the family of
invariant sets $\DD_\eps $. The restriction of $S(t)$ on $\DD_\eps$ is then shown to possess the global
attractor (see Section \ref{S10}).
In the final Section \ref{S11}, making use of a technical lemma proved in Section \ref{S9}, we complete the proof of the main result.
%%%%%%%%%%%%%%%%%%%%%%%%%%%%%%%%%%%%%%%%%%%%%%%%%

%%%%%%%%%%%%%%%%%%%%%%%%%%%%%%%%%%%%%%%%%%%%%%%%%
\section{Functional Setting and Notation}
\label{S2}

\subsection{Geometric spaces}
Calling $\HH=L^2(\I)$ with
inner product $\l\cdot,\cdot\r$ and norm $\|\cdot\|$,
we introduce the strictly positive selfadjoint Dirichlet
operator
$$A = -\partial_{xx} \qquad\text{with domain}\qquad \D(A) = H^2(\I)\cap H_0^1(\I)\Subset \HH,$$
$H^2(\I)$ and $H_0^1(\I)$ being the usual Sobolev spaces on the interval $\I$.
Then, for $r\in\R$, we define the compactly nested
family of Hilbert spaces
($r$ will be always omitted whenever zero)
$$
\HH^r=\D(A^{\frac{r}2}),\qquad\l u,v\r_r=\l A^{\frac{r}2}u,A^{\frac{r}2}v\r,\qquad
\|u\|_r=\|A^{\frac{r}2}u\|.
$$
The symbol $\l\cdot,\cdot\r$
also stands for duality product between $\HH^r$ and its dual space $\HH^{-r}$. In particular,
$$
\HH^2 = H^2(\I)\cap H_0^1(\I)\Subset \HH^1 = H_0^1(\I)\Subset \HH = L^2(\I),
$$
and we have the
Poincar\'e inequalities
$$
\sqrt{\lambda_1}\| u \|_r \leq \|u\|_{r+1},\quad\,\, \forall u \in \HH^{r+1},
$$
where $\lambda_1>0$ is the first eigenvalue of $A$.
These inequalities, as well as the H\"older and the Young inequalities, will be
used several times in what follows, often without explicit mention.
In order to simplify the calculations, we also consider the strictly positive selfadjoint operator
$$
B = I + A\qquad\text{with domain}\qquad\D(B)=\D(A).
$$
Such an operator $B$ commutes with $A$, and the bilinear form
$$
( u, v )_r =\l B^{\frac12}u,B^{\frac12}v \r_{r-1} = \l A^{\frac{r-1}{2}}B^{\frac12}u, A^{\frac{r-1}{2}}B^{\frac12}v\r
$$
defines an equivalent inner product on $\HH^r$, with induced norm
$$
\norm u \norm_r^2 = \|u\|_{r-1}^2 + \|u\|_r^2.
$$
In particular, setting
$$
\omega = \sqrt{\frac{1+\lambda_1}{\lambda_1}}>1,
$$
and exploiting the Poincar\'e inequalities, we get
\begin{equation}
\label{EQUIV}
\|u\|_{r}\leq \norm u \norm_r \leq \omega\|u\|_{r},\quad\,\, \forall u \in \HH^{r}.
\end{equation}

\subsection{Assumptions on the memory kernel}
The function $g$ is supposed to have the explicit form
$$
g(s)=\displaystyle \int_s^\infty \mu(y)\d y,
$$
where the so-called memory kernel $\mu\not\equiv 0$
is a nonnegative, nonincreasing and absolutely continuous function on $\R^+=(0,\infty)$. In particular $\mu$
is summable on $\R^+$ with
$$
g(0)=\int_0^\infty \mu(s)\d s\doteq\kappa>0,
$$
while the requirement that $g$ has total mass $1$ translates into
$$
\int_0^\infty s\mu(s)\,\d s=1.
$$
In addition, let the following structural conditions hold.

\smallskip
\begin{itemize}
\item[{\bf (M1)}] $\mu$ is bounded about zero, namely,
$$
\mu(0) = \lim_{s\to 0^+} \mu(s) <\infty.
$$
\item[{\bf (M2)}] $\mu$ satisfies for some $\delta>0$ and almost every $s>0$
the {\it Dafermos condition}
$$
\mu'(s) + \delta \mu(s) \leq 0.
$$
\end{itemize}

\subsection{Memory spaces}
We now consider the {\it memory spaces} (again $r$ will be omitted whenever zero)
$$
\M^r = L^2_\mu(\R^+; \HH^{r+1})
$$
of square summable $\HH^{r+1}$-valued functions on $\R^+$ with respect to the measure $\mu(s)\d s$,
endowed with the weighted inner product
$$\langle\eta,\xi\rangle_{\M^r}=\int_0^\infty \mu(s)\l\eta(s),\xi(s)\r_{r+1}\,\d s,
$$
with induced norm
$$\|\eta\|_{\M^r}=\bigg(\int_0^\infty \mu(s)\|\eta(s)\|_{r+1}^2\,\d s\bigg)^\frac12.
$$
We will also work with the equivalent inner product
$$
(\eta,\xi)_{\M^r}=\int_0^\infty \mu(s)(\eta(s),\xi(s))_{r+1}\,\d s,
$$
with induced norm $\norm\cdot\norm_{\M^r}$.
The infinitesimal generator of the right-translation semigroup on $\M$ is
the linear operator
$$T\eta=-\partial_s\eta\qquad\text{with domain}\qquad\D(T)=\big\{\eta\in{\M}:\,\partial_s\eta\in\M \,\,\,\text{and}\,\,\,
\lim_{s\to 0}\|\eta(s)\|_1=0\big\},$$
where $\partial_s$ stands for weak derivative with respect to the internal variable $s\in\R^+$.
For every $\eta\in\D(T)$, we introduce the nonnegative functional
$$
\Gamma[\eta] = -\int_0^\infty \mu'(s) \|\eta(s)\|_{1}^2.
$$
Exploiting the Dafermos condition (M2), it is apparent to see that
\begin{equation}
\label{gamm}
\Gamma[\eta] \geq \delta \|\eta\|_\M^2.
\end{equation}
Moreover,
an integration by parts together with a limiting argument yield (see e.g.\ \cite{CheP,Terreni})
\begin{equation}
\label{FUNCGAMMA}
\Gamma[\eta] = -2 \l T \eta ,\eta \r_{\M}.
\end{equation}

\subsection{Extended memory spaces}
Finally, we define the {\it extended memory spaces}
$$
\H^r = \HH^{r+1} \times \M^{r}
$$
endowed with the product norm
$$
\|(u,\eta)\|_{\H^r}^2 = \norm u \norm_{r+1}^2 +  \|\eta\|_{\M^{r}}^2.
$$
The phase space of our problem will be
$$
\H = \HH^{1} \times \M.
$$
%%%%%%%%%%%%%%%%%%%%%%%%%%%%%%%%%%%%%%%%%%%%%%%%%

%%%%%%%%%%%%%%%%%%%%%%%%%%%%%%%%%%%%%%%%%%%%%%%%%
\section{The Solution Semigroup}
\label{S3}

\noindent
We translate equation \eqref{BBM} in the
history space framework of Dafermos \cite{DAF}. To this end,
defining the additional variable
$$
\eta^t(x,s)=\int_{0}^s u(x,t-y)\,\d y,
$$
accounting for the integrated past history of $u$, we rewrite \eqref{BBM} subject
to the boundary condition~\eqref{BC} as
\begin{equation}
\label{base}
\begin{cases}
\displaystyle
B u_t + u_x + \int_0^\infty \mu(s) A \eta(s) \d s + u u_x=f,\\\noalign{\vskip0.5mm}
\eta_t = T\eta + u.
\end{cases}
\end{equation}
By means of a standard Galerkin approximation scheme, or
using the approach recently devised in \cite{CDGP}, system \eqref{base} above is shown to generate a strongly continuous
semigroup
$$
S(t):\H \to \H.
$$
Hence, for every initial datum $z\in\H$, the unique solution at time $t>0$
is given by
$$
S(t)z = (u(t),\eta^t),
$$
whose related (twice the) energy reads
$$
\E(t) = \|S(t)z\|^2_\H = \norm u(t)\norm_1^2 + \|\eta^t\|_\M^2.
$$
In addition, for every $R\geq 0$
there exists an increasing positive function $\Q_R(\cdot)$
such that the continuous dependence estimate
$$\|S(t)z_{1}-S(t)z_{2}\|_\H\leq \Q_R(t)\|z_{1}-z_{2}\|_\H$$
holds for all initial data $z_i$ with $\|z_i\|_\H\leq R$.

\begin{proposition}
\label{enin}
For all sufficiently regular initial data, we have the energy identity
$$
\ddt \E +\Gamma[\eta] =2 \l f ,u\r,
$$
where $\Gamma[\eta]$ is given by \eqref{FUNCGAMMA}.
\end{proposition}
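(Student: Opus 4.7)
The plan is to establish the identity by formally multiplying each component of system \eqref{base} by a suitable test function, then summing to exploit the cancellation between the cross terms. The nonlinear and transport contributions will vanish thanks to the boundary condition \eqref{BC}, while the memory dissipation $\Gamma[\eta]$ will emerge from the transport operator $T$ via identity \eqref{FUNCGAMMA}.

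First I would test the first equation of \eqref{base} with $2u$ in the $\HH$-duality. Since $B=I+A$ and $A$ commutes with time differentiation, the term $2\l B u_t,u\r$ produces $\ddt(\|u\|^2+\|u\|_1^2)=\ddt\norm u\norm_1^2$. The term $2\l u_x,u\r$ equals $\int_\I (u^2)_x\,\d x$ and vanishes by \eqref{BC}; similarly, $2\l u u_x,u\r=\tfrac{2}{3}\int_\I(u^3)_x\,\d x$ vanishes. The nonlocal term yields
$$
2\int_0^\infty\mu(s)\l A\eta(s),u\r\d s,
$$
and the right-hand side gives $2\l f,u\r$.

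Next I would test the second equation of \eqref{base} with $2\eta$ in the $\M$-inner product. The time derivative produces $\ddt\|\eta\|_\M^2$, while by \eqref{FUNCGAMMA} the contribution of $T\eta$ is $2\l T\eta,\eta\r_\M=-\Gamma[\eta]$. The forcing term $2\l u,\eta\r_\M$ unfolds as
$$
2\int_0^\infty\mu(s)\l u,\eta(s)\r_1\d s=2\int_0^\infty\mu(s)\l u,A\eta(s)\r\d s,
$$
which is precisely the nonlocal term from the first step, with opposite sign when both equations are placed on the left. Adding the two identities, those two terms cancel exactly and one is left with
$$
\ddt\bigl(\norm u\norm_1^2+\|\eta\|_\M^2\bigr)+\Gamma[\eta]=2\l f,u\r,
$$
which is the claimed identity, since $\E=\norm u\norm_1^2+\|\eta\|_\M^2$.

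The main delicacy, rather than a true obstacle, is justifying the duality pairings: the equality $\l A\eta(s),u\r=\l\eta(s),u\r_1$ and the exchange of $A$ with the integral in $s$ require that $\eta(s)$ lie in $\HH^2$ for a.e.\ $s$ and that the operators $T$ and $B$ commute with differentiation in time. This is exactly why the statement is restricted to sufficiently regular initial data; the identity for general $z\in\H$ then follows by a standard density/approximation argument once the semigroup $S(t)$ has been constructed. The use of \eqref{FUNCGAMMA} also implicitly requires $\eta^t\in\D(T)$, which again is guaranteed by the regularity assumption.
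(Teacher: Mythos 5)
Your proof is correct and follows essentially the same route as the paper: multiply the first equation of \eqref{base} by $2u$ in $\HH$ and the second by $2\eta$ in $\M$, sum so that the coupling terms cancel, invoke \eqref{FUNCGAMMA} for the memory term, and kill $2\l u_x,u\r$ and $2\l uu_x,u\r$ via the boundary condition \eqref{BC}. The extra remarks on the duality pairings and the density argument are a reasonable elaboration of the paper's blanket "sufficiently regular initial data" caveat, but add nothing essentially different.
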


\begin{proof}
We multiply the first equation of \eqref{base} by $2u$ in $\HH$ and the second one
by $2\eta$ in $\M$. Taking the sum and making use of \eqref{FUNCGAMMA}, we get
$$
\ddt \E +\Gamma[\eta] + 2 \l u_x , u\r + 2 \l u u_x,u\r =  2 \l f ,u\r.
$$
Exploiting the Dirichlet boundary condition \eqref{BC},
$$
2 \l u_x , u\r + 2 \l u u_x,u\r = \int_a^b \ddx u^2(x)\d x + \frac{2}{3} \int_a^b \ddx u^3(x)\d x = 0,
$$
and the conclusion follows.
\end{proof}
%%%%%%%%%%%%%%%%%%%%%%%%%%%%%%%%%%%%%%%%%%%%%%%%%

%%%%%%%%%%%%%%%%%%%%%%%%%%%%%%%%%%%%%%%%%%%%%%%%%
\section{Statement of the Main Result}
\label{S4}

\noindent
The most relevant object in the longterm analysis
of a semigroup is the {\it global attractor} (see e.g. \cite{BV,HAL,TEM}). Let us recall the definition.

\begin{definition}
The global attractor of the semigroup $S(t)$ is the unique compact set $\A\subset\H$
which is at the same time

\begin{itemize}
\item[(i)] {\it fully invariant:}\ $\,S(t)\A = \A$ for every $t\geq0$; and

\smallskip
\item[(ii)] {\it attracting:}\ $\,\boldsymbol{\delta}(S(t)\B,\A)\to0$ as $t\to\infty$ for any bounded set
$\B\subset\H$.

\end{itemize}
\end{definition}

In the usual notation,
$$\boldsymbol{\delta}(\B_1,\B_2) = \sup_{z_1\in\B_1} \inf_{z_2\in\B_2} \|z_1-z_2\|_\H
$$
is the Hausdorff semidistance between two (nonempty) sets $\B_1,\B_2\subset\H$.

\begin{remark}
The notion of attractor can also be given for the {\it restriction} of $S(t)$
on any closed subset $\DD\subset \H$ (hence, a complete metric space with the distance inherited by $\H$), provided that $\DD$ is
{\it invariant} for the semigroup, i.e.\
$$S(t)\DD\subset\DD,\quad\forall t\geq 0.$$
In this case, the definition above makes sense simply by replacing the whole space $\H$ with $\DD$.
In particular, if the restriction of $S(t)$ on $\DD$ possesses the attractor $\A'$,
then $\A'$ is the largest fully invariant bounded subset of $\DD$,
namely, for every fully invariant bounded set $\B\subset\DD$ the inclusion
$\B \subset\A'$ holds. This fact will be heavily used in the sequel.
\end{remark}

The main result of the paper establishes the existence of the global attractor
of the semigroup $S(t)$ on $\H$
under a suitable smallness assumption on the
primitive
\begin{equation}
\label{EFFEBIG}
F(x) = \int_a^x f(y) \d y, \quad x\in\I=(a,b),
\end{equation}
of the external force $f$.

\begin{theorem}
\label{attractor}
There exists a structural constant $\mathfrak c>0$ such that if
$$\|F\|<{\mathfrak c}$$
then the semigroup $S(t):\H\to\H$ possesses the global attractor $\A$.
Moreover, $\A$ is a
bounded subset of $\H^1$.
\end{theorem}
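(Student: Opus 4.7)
The plan is to follow the nonstandard strategy sketched in the introduction, rather than the classical bounded-absorbing-set scheme. The latter seems hopeless here: the only dissipation comes from the quadratic form $\Gamma[\eta]$, which by \eqref{gamm} controls the history component $\eta$ but gives no direct damping on $u$. Moreover, with a nonzero external force the gradient-system argument of \cite{DMP} is not available, so the reconstruction of the missing control of $\norm u\norm_1$ via an auxiliary multiplier has to compete with the cubic terms generated by the nonlinearity $uu_x$; these can be absorbed only on orbits whose magnitude is already under control, which naturally suggests working with a family of invariant sets.

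Concretely, I would first rewrite the force term by integration by parts as $\l f,u\r=-\l F,u_x\r$, so that the smallness $\|F\|<\mathfrak c$ can be traded against the nonlinear remainder, and then combine the energy identity of Proposition~\ref{enin} with a multiplier of the form $\alpha\l u,\int_0^\infty\mu(s)\eta(s)\d s\r$ to produce a Lyapunov-type functional $\Lambda$ equivalent to $\|\cdot\|_\H^2$. For $\alpha$ small enough and $\|F\|$ small enough, this should yield an inequality of the shape
$$\ddt \Lambda+\eps_0\Lambda\leq K_0+P(\Lambda)\|F\|,$$
with $K_0,\eps_0>0$ and $P$ a polynomial encoding the nonlinear contribution. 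By means of the two ODE lemmas of Section~\ref{S5}, suitable sublevel sets $\DD_\eps=\{\Lambda\leq M(\eps)\}$ would then turn out to be closed and positively invariant, nested in $\eps$, and to exhaust $\H$ as $\eps\to 0$; on each of them the restricted semigroup would admit a bounded absorbing set.

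Once the $\DD_\eps$ are in place, the existence of the attractor $\A_\eps$ of $S(t)$ restricted to $\DD_\eps$ should be obtained by a standard splitting $S(t)z=L(t)z+K(t)z$: the piece $L(t)z$ solves the homogeneous linear part and decays exponentially in $\H$ along the lines of \cite{DMP}, while $K(t)z$ can be shown to stay bounded in the more regular space $\H^1$ using the embedding $\HH^1\hookrightarrow L^\infty(\I)$ to tame $uu_x$ and the extra dissipation stored in $\Gamma[\eta]$. The memory contribution is then handled via the Dafermos condition (M2) together with the technical lemma announced for Section~\ref{S9}, which is designed to deliver the asymptotic compactness in the $\M^1$-component. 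Standard theory then yields a compact attractor $\A_\eps\subset\DD_\eps$, bounded in $\H^1$.

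The hard part, and the main obstacle, is to patch the $\A_\eps$ into a single set $\A$. Since the $\DD_\eps$ are nested, the maximality property recorded in the Remark gives $\A_{\eps'}\subset\A_\eps$ whenever $\eps<\eps'$. The reverse inclusion is the delicate point: it amounts to showing that $\A_\eps$ in fact lies in every $\DD_{\eps'}$ with $\eps'<\eps$, which in turn requires a uniform bound of the functional $\Lambda$ on $\A_\eps$; this is exactly where the fine analysis of Section~\ref{S9}, coupled with the full invariance of $\A_\eps$, is expected to pay off. Once all the $\A_\eps$ collapse onto a common compact set $\A$, the fact that any bounded $\B\subset\H$ lies in some $\DD_\eps$ for $\eps$ small enough shows that $\A$ attracts $\B$, while the $\H^1$-boundedness of $\A$ is inherited from that of each $\A_\eps$, completing the proof of Theorem~\ref{attractor}.
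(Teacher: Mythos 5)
Your overall architecture does coincide with the paper's: the multiplier $-\int_0^\infty\mu(s)(u,\eta(s))_1\,\d s$ added to the energy, a Riccati-type differential inequality handled by the two ODE lemmas of Section~\ref{S5}, invariant sets $\DD_\eps$, attractors $\A_\eps$ for the restricted semigroups via an exponentially-decaying-plus-compact splitting, identification of the $\A_\eps$ through the technical lemma of Section~\ref{S9} combined with full invariance, and a final exhaustion argument. However, there is a genuine gap in the central construction. You fix the multiplier coefficient $\alpha$ once, obtain a \emph{single} functional $\Lambda$ satisfying a \emph{single} inequality $\Lambda'+\eps_0\Lambda\leq K_0+P(\Lambda)\|F\|$, and set $\DD_\eps=\{\Lambda\leq M(\eps)\}$. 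For one fixed quadratic inequality, Lemma~\ref{lemmaODE1} yields invariance only of the sublevels $\{\Lambda\leq\lambda\}$ with $\lambda$ in the bounded window $(\lambda_-,\lambda_+)$ between the two roots; above $\lambda_+$ the quadratic term wins and invariance fails. Hence all your $\DD_\eps$ sit inside the fixed bounded set $\{\Lambda\leq\lambda_+\}$ and cannot exhaust $\H$, so bounded sets of large norm are never captured and the attraction of arbitrary $\B\subset\H$ cannot be established. The paper's remedy is to make the multiplier coefficient itself the parameter: the family $\Lambda_\eps$ satisfies $\L_\eps'+\eps c_1\L_\eps\leq c_2\|F\|^2+c_3\eps^2\L_\eps^2$ (Lemma~\ref{fi}), and the scaling $a=c_3\eps^2$, $b=\eps c_1/2$, $c=c_2\|F\|^2$ makes $\varrho=b/\sqrt{ac}$ independent of $\eps$ (and $>1$ precisely under the smallness of $\|F\|$) while the roots $\lambda_\pm$ scale like $1/\eps$; this is what lets the invariant levels $c_*/\eps$ blow up and the sets $\DD_\eps$ fill the space. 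Note also that the quadratic term comes from $\l uu_x,\eta\r$ and is \emph{not} proportional to $\|F\|$ as in your displayed inequality; the smallness of $F$ enters only through $\varrho>1$.

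Two smaller points. First, in the patching step the direction of the index is reversed: to obtain $\A_\eps\subset\A_{\eps_*}$ one must prove $\A_\eps\subset\DD_{\eps_*}$ with $\eps_*>\eps$ (the \emph{smaller} set); the inclusion into $\DD_{\eps'}$ with $\eps'<\eps$ that you state is automatic from nesting and gives nothing. Second, the resolution you anticipate, namely the decay property of Section~\ref{S9} played against the full invariance of $\A_\eps$ in a contradiction argument, is indeed exactly the paper's Lemma~\ref{uuu}, so that part of your plan is on target once the family $\Lambda_\eps$ replaces the single $\Lambda$.
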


The constant $\mathfrak c$ above is {\it independent} of $f$, and can be explicitly calculated in terms of the
other structural quantities of the problem.

\smallskip
The remaining of the paper is devoted to the proof of Theorem \ref{attractor}.
In what follows, we will always assume $f\not\equiv 0$. This in particular implies $\|F\|>0$.
Indeed, according to \cite{DMP}, when $f\equiv 0$
exponential stability occurs.
%%%%%%%%%%%%%%%%%%%%%%%%%%%%%%%%%%%%%%%%%%%%%%%%%

%%%%%%%%%%%%%%%%%%%%%%%%%%%%%%%%%%%%%%%%%%%%%%%%%
\section{Two Lemmas from ODEs}
\label{S5}

\noindent
We begin with two technical ODE results
needed in the course of the  investigation.
Let $\L\in\mathcal{C}^1([0,\infty))$ be a fixed function satisfying for every $t\geq0$
the differential inequality
\begin{equation}
\label{DI}
\L'(t) + 2b \L(t) \leq c +a \L^2(t),
\end{equation}
for some $a,b,c>0$ subject to the structural constraint
$$
\frac{b}{\sqrt{ac}}\doteq \varrho >1.
$$
In particular, this implies that\footnote{Note that $\lambda_{\pm}$ are the roots of the equation
$ax^2 - 2bx+c=0$.}
$$
\lambda_{-}\doteq\sqrt{\frac{c}{a}}(\varrho-\sqrt{\varrho^2-1}) < \sqrt{\frac{c}{a}}(\varrho+\sqrt{\varrho^2-1})\doteq\lambda_+.
$$

\begin{lemma}
\label{lemmaODE1}
Let $\lambda\in (\lambda_{-},\lambda_+)$ be arbitrarily chosen.
Then, the following implication holds:
$$
\L(0)\leq \lambda  \quad \Rightarrow \quad \sup_{t\geq0}\L(t) \leq \lambda.
$$
\end{lemma}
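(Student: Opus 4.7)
The plan is to treat the differential inequality \eqref{DI} as a comparison against the quadratic $P(x)\doteq ax^2-2bx+c$, rewriting it as $\L'(t) \leq P(\L(t))$. Since $\varrho>1$, the polynomial $P$ has the two distinct real roots $\lambda_\pm$, and $P(x)<0$ precisely on the open interval $(\lambda_-,\lambda_+)$. Thus any $\lambda\in(\lambda_-,\lambda_+)$ satisfies the strict bound $P(\lambda)<0$, which is the only information I intend to use about $\lambda$.

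I would then proceed by contradiction: assuming $\L(0)\leq\lambda$ but $\L$ exceeds $\lambda$ somewhere in $(0,\infty)$, I set
$$
t_\star = \inf\big\{t\geq 0 : \L(t) > \lambda\big\}.
$$
By continuity of $\L$, the set on the right is open, so $\L(t_\star)=\lambda$ and there exists a sequence $t_n\searrow t_\star$ with $\L(t_n)>\lambda=\L(t_\star)$, which forces
$$
\L'(t_\star) \geq 0.
$$
On the other hand, evaluating \eqref{DI} at $t_\star$ gives
$$
\L'(t_\star) \leq P(\L(t_\star)) = P(\lambda) < 0,
$$
and this contradiction concludes the argument. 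The case $\lambda=\lambda_-$ can be treated identically (in fact $\lambda=\lambda_-$ is the boundary case one would use to state the same result for $\L(0)\leq\lambda_-$ if needed), but for $\lambda\in(\lambda_-,\lambda_+)$ one only needs the strict negativity of $P$.

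The step I expect to require the most care is actually the one that looks most trivial, namely the reduction of the problem to the single inequality $P(\lambda)<0$: one has to observe that the smallness hypothesis on $\lambda$ plays no role beyond ensuring $P(\lambda)<0$, which is why the bound $\lambda<\lambda_+$ (and not some sharper quantitative estimate) suffices. No regularity issues arise since $\L\in\mathcal{C}^1$ makes $\L'$ continuous, so the one-sided information at $t_\star$ is genuine. The argument is therefore a clean invariance-of-sublevel-sets statement for scalar autonomous comparison ODEs, and I would write it up in essentially the four lines above.
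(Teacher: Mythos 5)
Your proof is correct and follows essentially the same route as the paper's: both are first-crossing-time arguments in which the differential inequality forces $\L'<0$ at the first point where $\L$ reaches the level $\lambda$, via the strict negativity of $a x^2-2bx+c$ on $(\lambda_-,\lambda_+)$. (Only your parenthetical aside that the case $\lambda=\lambda_-$ "can be treated identically" is off, since $P(\lambda_-)=0$ destroys the strict inequality; but this is immaterial to the lemma as stated.)
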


\begin{proof}
Since $\L$ is continuous, let us define
$$
t_* = \max \big\{ \tau\geq0 :\, \L(s) \leq \lambda, \, \forall s \in [0,\tau]\big\}.
$$
Our aim is showing that $t_* = \infty$. If not,
$\L(t_*)=\lambda$ and by \eqref{DI}
$$
\L'(t_*) \leq a (\lambda - \lambda_{-})(\lambda - \lambda_{+})<0.
$$
As a consequence, the function $\L$ is decreasing in a right neighborhood of $t_*$,
contradicting the maximality of $t_*$.
\end{proof}

\begin{lemma}
\label{lemmaODE2}
There exists a time $t_\varrho>0$ depending only
on $\varrho$ such that the following implication holds:
$$ \L(0)\leq \sqrt{\frac{c}{a}} (2\varrho-1)\quad \Rightarrow \quad
\sup_{t\geq t_\varrho/\sqrt{ac}}  \L(t) \leq   \sqrt{\frac{c}{a}}\left(\frac{1}{2\varrho-1}\right).
$$
\end{lemma}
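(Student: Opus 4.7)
The plan is to apply Lemma~\ref{lemmaODE1} twice: first at $t=0$ to trap $\L$ below the threshold $M\doteq\sqrt{c/a}\,(2\varrho-1)$, and then at a later time to push it below the smaller threshold $m\doteq\sqrt{c/a}/(2\varrho-1)$. The crucial ordering
$$
\lambda_- \,<\, m \,<\, M \,<\, \lambda_+
$$
is a direct check: the rightmost inequality reduces to $\varrho-1<\sqrt{\varrho^2-1}$, which is immediate for $\varrho>1$, and the leftmost one follows from the identity $(\varrho-\sqrt{\varrho^2-1})(\varrho+\sqrt{\varrho^2-1})=1$ together with the same bound. Since $\L(0)\le M$ by hypothesis and $M\in(\lambda_-,\lambda_+)$, Lemma~\ref{lemmaODE1} applied with $\lambda=M$ already yields $\L(t)\le M$ for all $t\ge 0$.

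Next I extract a uniform decay rate as long as $\L$ stays in the band $[m,M]$. Rewriting \eqref{DI} as
$$
\L'(t) \,\le\, a\bigl(\L(t)-\lambda_-\bigr)\bigl(\L(t)-\lambda_+\bigr) \,=\, -a\bigl(\L(t)-\lambda_-\bigr)\bigl(\lambda_+-\L(t)\bigr),
$$
and using the monotonicity bounds $\L-\lambda_-\ge m-\lambda_-$ and $\lambda_+-\L\ge \lambda_+-M$ valid on $[m,M]$, one obtains
$$
\L'(t)\,\le\,-K, \qquad K\doteq a\,(m-\lambda_-)(\lambda_+-M)\,>\,0.
$$
Let $t_1$ denote the first time at which $\L(t)=m$, with the convention $t_1=0$ if $\L(0)\le m$. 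On $[0,t_1)$ the previous inequality integrates to $\L(t)\le M-Kt$, forcing $t_1\le (M-m)/K$. Since $m\in(\lambda_-,\lambda_+)$ and $\L(t_1)\le m$, a second application of Lemma~\ref{lemmaODE1}, restarted at time $t_1$ with $\lambda=m$, delivers $\L(t)\le m$ for every $t\ge t_1$.

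It then suffices to set
$$
t_\varrho \doteq \sqrt{ac}\,\frac{M-m}{K}.
$$
Factoring $\sqrt{c/a}$ out of $M-m$ and $c/a$ out of $K/a=(m-\lambda_-)(\lambda_+-M)$, the scale constants $a$ and $c$ cancel entirely: one finds $M-m = \sqrt{c/a}\,\phi(\varrho)$ and $K = c\,\psi(\varrho)$ for explicit positive functions $\phi,\psi$ of $\varrho$ alone, hence $t_\varrho=\phi(\varrho)/\psi(\varrho)$ is a function of $\varrho$ only. For $t\ge t_\varrho/\sqrt{ac}\ge t_1$ the bound $\L(t)\le m=\sqrt{c/a}/(2\varrho-1)$ follows. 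I do not expect a serious obstacle: the only delicate points are the verification of the ordering $\lambda_-<m<M<\lambda_+$, which legitimizes both invocations of Lemma~\ref{lemmaODE1}, and the scale-invariance check that certifies $t_\varrho$ depends on $\varrho$ alone.
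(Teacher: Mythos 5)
Your proof is correct, but the second half follows a genuinely different route from the paper's. Both arguments begin identically: check that $M\doteq\sqrt{c/a}\,(2\varrho-1)$ lies in $(\lambda_-,\lambda_+)$ and invoke Lemma~\ref{lemmaODE1} to trap $\L$ below $M$ for all time. From there the paper rescales to $y(t)=\sqrt{a/c}\,\L(t/\sqrt{ac})$, writes the inequality as $y'\leq(y-r)(y-\tfrac1r)$ with $r=\varrho+\sqrt{\varrho^2-1}$, and runs a Gronwall/integrating-factor estimate $y(t)-\tfrac1r\leq(y(0)-\tfrac1r)\,\e^{(2\varrho-1-r)t}$, reading off $t_\varrho$ as the explicit time at which this exponential envelope crosses $\tfrac{1}{2\varrho-1}-\tfrac1r$. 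You instead exploit the uniform negative drift $\L'\leq -K$ with $K=a(m-\lambda_-)(\lambda_+-M)$ on the compact band $[m,M]$, deduce that the level $m$ must be hit by time $(M-m)/K$, and then re-apply Lemma~\ref{lemmaODE1} (restarted at the hitting time, which is legitimate since \eqref{DI} is autonomous) to lock the bound $\L\leq m$ in for all later times. Your version is more elementary --- no integrating factor, just a linear barrier plus a second invocation of the trapping lemma --- at the price of a cruder (larger) admissible $t_\varrho$; the paper's version yields a sharper explicit constant. Both correctly verify the ordering $\lambda_-<m<M<\lambda_+$ and the scale-invariance making $t_\varrho$ a function of $\varrho$ alone, which is all the lemma requires, since only the existence of some $t_\varrho=t_\varrho(\varrho)$ matters for the sequel.
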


\begin{proof}
Being $\varrho>1$, it is immediate to verify that
$$
\sqrt{\frac{c}{a}} (2\varrho-1) \in (\lambda_{-},\lambda_+).
$$
Therefore, applying Lemma \ref{lemmaODE1} with $\lambda = \sqrt{\tfrac{c}{a}} (2\varrho-1) $,
$$
\sup_{t\geq0} \L(t) \leq \sqrt{\frac{c}{a}} (2\varrho-1).
$$
At this point, in order to simplify the computations, we introduce the auxiliary function
$$y(t)=\sqrt\frac{a}{c}\L\bigg(\frac{t}{\sqrt{ac}}\bigg),$$
along with the number
$$r=\varrho+\sqrt{\varrho^2-1} > 2\varrho-1>1.$$
In particular, the inequality above provides the control
\begin{equation}
\label{CONT}
\sup_{t\geq0} y(t) \leq 2\varrho-1.
\end{equation}
Exploiting now \eqref{DI}, by means of direct calculations we see that
$$
\ddt \Big(y(t)-\frac1r\Big) = y' (t)\leq (y(t)-r) \Big(y(t)-\frac 1 r\Big).
$$
Hence, appealing to the Gronwall lemma and \eqref{CONT}, we get the estimate
\begin{align*}
y(t)-\frac 1 r&\leq \Big(y(0)-\frac1r\Big)\e^{\int_0^t (y(s)- r)\d s}\\
&\leq \Big(2\varrho-1-\frac 1 r\Big)\e^{\int_0^t (y(s)- r)\d s}\\
&\leq \Big(2\varrho-1-\frac 1 r\Big)\e^{(2\varrho-1-r)t},
\end{align*}
valid for every $t\geq0$. Note that $2\varrho-1-\frac 1 r>0$. Calling
$$
t_\varrho = \frac{1}{(2\varrho-1-r)}\log \bigg( \frac{r+1-2\varrho}{4 \varrho r(\varrho-1)+r+1-2\varrho} \bigg)>0,
$$
which solves the equation
$$\Big(2\varrho-1-\frac 1 r\Big)\e^{(2\varrho-1-r)t_\varrho}=\frac{1}{2\varrho-1}-\frac 1 r,$$
we conclude that
$$y(t)\leq \frac1{2\varrho-1},\quad\forall t\geq t_\varrho.$$
Returning to the original $\L$, the proof is finished.
\end{proof}
%%%%%%%%%%%%%%%%%%%%%%%%%%%%%%%%%%%%%%%%%%%%%%%%%

%%%%%%%%%%%%%%%%%%%%%%%%%%%%%%%%%%%%%%%%%%%%%%%%%
\section{A Family of Geometric Functionals}
\label{S6}

\noindent
For any given $z=(u,\eta)\in\H$ and $\eps>0$, we introduce the geometric functional
$$
\Lambda_\eps (z) = \|z\|_{\H}^2
+ \frac{2}{\kappa} \int_0^\infty \mu(s) \l F , \eta_x(s)\r \d s
+ \frac{2}{\kappa} \|F\|^2  -\frac{\eps}{\sqrt\kappa}  \int_0^\infty \mu(s) ( u ,  \eta(s))_1 \,\d s,
$$
with $F$ as in \eqref{EFFEBIG}.
Exploiting the H\"older and Young inequalities, together with \eqref{EQUIV},
it is readily seen that
\begin{equation}
\label{geoen}
\frac{(1-\eps\omega)}{2}\|z\|_{\H}^2 \leq
\Lambda_\eps(z) \leq \frac{(3+\eps\omega)}{2}\|z\|_{\H}^2 + \frac4\kappa\|F\|^2.
\end{equation}
We now show that these functionals are all equivalent, at least if $\eps$ is sufficiently small.

\begin{lemma}
For every $0<\alpha<\eps< \tfrac{1}{2\omega}$, we have
\begin{equation}
\label{calculus1}
\Lambda_\alpha(z)\leq \frac{\Lambda_\eps(z)}{1-\omega\eps} \leq \frac{\Lambda_\alpha(z)}{1-2\omega\eps}.
\end{equation}
\end{lemma}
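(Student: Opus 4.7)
The plan is to exploit the fact that $\Lambda_\eps(z)$ depends affinely on $\eps$. Setting
$$
\Phi(z) = \frac{1}{\sqrt\kappa}\int_0^\infty \mu(s)\,(u,\eta(s))_1\,\d s,
$$
we have $\Lambda_\eps(z) = \Lambda_0(z) - \eps\Phi(z)$, and therefore
$$
\Lambda_\alpha(z) - \Lambda_\eps(z) = (\eps-\alpha)\Phi(z),\qquad 0<\alpha<\eps.
$$
Both inequalities in \eqref{calculus1} will then drop out of a single two-sided estimate of $|\Phi(z)|$ in terms of $\Lambda_\eps(z)$.

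The key step is the pointwise bound $|\Phi(z)|\leq\tfrac{\omega}{2}\|z\|_\H^2$. This is obtained from the Cauchy-Schwarz inequality applied first inside $(\cdot,\cdot)_1$ and then with respect to $\mu(s)\d s$ (using $\int_0^\infty\mu(s)\d s=\kappa$), which yields $|\Phi(z)|\leq \norm u\norm_1\,\norm \eta\norm_\M$; then from \eqref{EQUIV} applied inside the memory integral in the form $\norm\eta\norm_\M\leq\omega\|\eta\|_\M$; and finally from a symmetric Young inequality against $\|z\|_\H^2=\norm u\norm_1^2+\|\eta\|_\M^2$.

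Since $\eps<\tfrac{1}{2\omega}<\tfrac{1}{\omega}$, the left estimate in \eqref{geoen} gives $\|z\|_\H^2\leq\frac{2\Lambda_\eps(z)}{1-\omega\eps}$, and combining with the previous step,
$$
(\eps-\alpha)|\Phi(z)|\leq\frac{(\eps-\alpha)\omega}{1-\omega\eps}\Lambda_\eps(z).
$$
Plugging this bound into the identity above, the two choices of sign produce respectively
$$
\Lambda_\alpha(z)\leq\frac{1-\alpha\omega}{1-\omega\eps}\Lambda_\eps(z)\leq\frac{\Lambda_\eps(z)}{1-\omega\eps}
$$
(which is the first inequality of \eqref{calculus1}), and
$$
\Lambda_\alpha(z)\geq\frac{1-(2\eps-\alpha)\omega}{1-\omega\eps}\Lambda_\eps(z)\geq\frac{1-2\omega\eps}{1-\omega\eps}\Lambda_\eps(z),
$$
which rearranges into the second. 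There is no genuine obstacle here; the only delicate point is to verify that the constant $\omega$ produced in the Cauchy-Schwarz--Young estimate for $\Phi$ is precisely the $\omega$ appearing in \eqref{geoen}, so that the factors telescope cleanly and the sharp denominators $1-\omega\eps$ and $1-2\omega\eps$ appear in the final bounds.
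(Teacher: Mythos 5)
Your proof is correct and follows essentially the same route as the paper: both arguments bound the difference $\Lambda_\alpha(z)-\Lambda_\eps(z)$ by $\tfrac{\omega}{2}(\eps-\alpha)\|z\|_\H^2$ via Cauchy--Schwarz, \eqref{EQUIV} and Young, and then convert $\|z\|_\H^2$ into $\Lambda_\eps(z)$ through the lower bound in \eqref{geoen}. The only (cosmetic) difference is that you keep the factor $\eps-\alpha$ and obtain the slightly sharper intermediate constants $1-\alpha\omega$ and $1-(2\eps-\alpha)\omega$ before relaxing them, whereas the paper bounds $\eps-\alpha$ by $\eps$ at the outset.
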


\begin{proof}
Making use of \eqref{EQUIV}, it is immediate to check that
$$
|\Lambda_\eps(z)-\Lambda_\alpha(z)| \leq (\eps-\alpha)\omega \norm u \norm_1 \|\eta\|_\M
\leq \frac{\eps\omega}{2}\|z\|_\H^2.
$$
Appealing to the first inequality in \eqref{geoen} we get
\begin{align*}
\Lambda_\alpha(z) \leq \Lambda_\eps(z)  + \frac{\eps\omega}{2}\|z\|_\H^2
\leq  \frac{\Lambda_\eps(z)}{1-\omega\eps}.
\end{align*}
By the same token,
\begin{align*}
\Lambda_\eps(z) \leq \Lambda_\alpha(z)  + \frac{\eps\omega}{2}\|z\|_\H^2
\leq  \Lambda_\alpha(z) + \frac{\eps\omega}{1-\eps\omega}  \Lambda_\eps(z).
\end{align*}
Hence
$$
\frac{1-2\eps\omega}{1-\eps\omega}\Lambda_\eps(z) \leq \Lambda_\alpha(z),
$$
which completes the proof.
\end{proof}
%%%%%%%%%%%%%%%%%%%%%%%%%%%%%%%%%%%%%%%%%%%%%%%%%

%%%%%%%%%%%%%%%%%%%%%%%%%%%%%%%%%%%%%%%%%%%%%%%%%
\section{A Family of Energy Inequalities}
\label{S7}

\noindent
Throughout the paper we will perform several formal estimates, which are fully justified
within a proper approximation scheme.
In order to study the longterm behavior of the semigroup $S(t)$, we need to
derive a suitable family of differential
inequalities for the energy-like functional
\begin{align*}
\L_\eps(t) &= \Lambda_\eps (S(t)z)\\
&=\E(t) + \frac{2}{\kappa} \int_0^\infty \mu(s) \l F , \eta_x^t(s)\r \d s
+ \frac{2}{\kappa} \|F\|^2 -\frac{\eps}{\sqrt\kappa} \int_0^\infty \mu(s) ( u(t) ,  \eta^t(s))_1 \,\d s.
\end{align*}
Observe that, in the light of \eqref{geoen},  the controls
\begin{equation}
\label{finalcont}
\frac14 \E(t)\leq \L_\eps(t) \leq 2 \E(t) + \frac{4}{\kappa} \|F\|^2
\end{equation}
hold for every $\eps\in(0,\tfrac{1}{2\omega})$
and every $t\geq0$.

\begin{lemma}
\label{fi}
There exist
$$c_1,c_2,c_3>0\and 0<\eps_0<\frac{1}{2\omega},$$
all
independent of $f$, and depending only on the other structural quantities of the problem,
such that the differential inequality
$$
\ddt \L_\eps(t)  + \eps c_1 \L_\eps(t) \leq c_2\|F\|^2 + c_3 \eps^2 \L_\eps^2(t)
$$
is satified for every $t\geq0$ and every $\eps\in (0,\eps_0]$.
\end{lemma}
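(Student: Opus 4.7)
The plan is to differentiate $\L_\eps$ term by term, use the energy identity of Proposition~\ref{enin} and the evolution system \eqref{base} to expose a dissipation mechanism for both $\eta$ (through $\Gamma[\eta]$) and $u$ (through the $\eps$-corrector), and then absorb the resulting error terms by Cauchy--Schwarz, Young, and Poincar\'e. First I would compute the contribution of $\tfrac{2}{\kappa}\int_0^\infty \mu(s)\l F, \eta_x^t(s)\r\d s$: substituting $\eta_t = T\eta + u$ and integrating by parts in $s$ (boundary terms vanish by $\eta_x(0)=0$ and summability of $\mu$), it becomes $\tfrac{2}{\kappa}\int_0^\infty\mu'(s)\l F, \eta_x(s)\r\d s + 2\l F, u_x\r$. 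The second summand equals $-2\l f, u\r$ after integration by parts in $x$ (using $F(a)=0$ and \eqref{BC}), which exactly cancels the forcing term $2\l f, u\r$ arising from $\ddt \E$. The residual $\tfrac{2}{\kappa}\int\mu'\l F, \eta_x\r\d s$ is bounded by $c\|F\|^2 + \tfrac{1}{4}\Gamma[\eta]$ via Cauchy--Schwarz against the nonnegative measure $-\mu'(s)\d s$ (finite by (M1)) and Young.

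Next I would handle the $\eps$-corrector $\Psi = -\tfrac{\eps}{\sqrt\kappa}\int_0^\infty\mu(s)(u,\eta(s))_1\d s$, which is where the $u$-dissipation is manufactured. Its time derivative produces three pieces: the diagonal term $-\eps\sqrt\kappa\norm u\norm_1^2$ (from $(u,u)_1 = \norm u\norm_1^2$ and $\int_0^\infty\mu = \kappa$), a $T\eta$-piece that becomes $-\tfrac{\eps}{\sqrt\kappa}\int_0^\infty\mu'(s)(u,\eta(s))_1\d s$ after integration by parts in $s$ and is bounded by $\tfrac{\eps\sqrt\kappa}{8}\norm u\norm_1^2 + C\eps^2\Gamma[\eta]$, and a $u_t$-piece $-\tfrac{\eps}{\sqrt\kappa}\int_0^\infty\mu(s)\l B u_t, \eta(s)\r\d s$ which I would expand by substituting $B u_t = f - u_x - \int_0^\infty\mu(s')A\eta(s')\d s' - uu_x$. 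The three linear subpieces of this expansion are controlled by combinations of $\eps\|F\|^2$, $\eps\norm u\norm_1^2$, and $\eps\|\eta\|_\M^2$ via Poincar\'e and Cauchy--Schwarz (the double memory integral collapses to $\kappa\|\eta\|_\M^2$ by Cauchy--Schwarz), all absorbable for $\eps$ small thanks to \eqref{gamm}.

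The hard part will be the nonlinear subpiece $\tfrac{\eps}{\sqrt\kappa}\int_0^\infty\mu(s)\l uu_x, \eta(s)\r\d s$. Here I would invoke the one-dimensional Sobolev embedding $H^1_0(\I)\hookrightarrow L^\infty(\I)$ to obtain $|\l uu_x, \eta(s)\r| \leq C\|u\|_1^2\|\eta(s)\|$, and after a Cauchy--Schwarz against $\mu(s)\d s$ reach a control of the form $C\eps\|u\|_1^2\|\eta\|_\M$. An $\eps$-independent Young splitting $ab \leq \tfrac{a^2}{4\gamma}+\gamma b^2$ then converts this into $\tfrac{C\eps^2}{\gamma}\|u\|_1^4 + \gamma\|\eta\|_\M^2$; taking $\gamma$ small enough absorbs the second summand into $\Gamma[\eta]$, while \eqref{finalcont} gives $\|u\|_1^4 \leq \E^2 \leq 16\L_\eps^2$, yielding a contribution of the form $c_3\eps^2\L_\eps^2$. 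This is precisely why the \emph{quadratic} term $\eps^2\L_\eps^2$ (rather than, say, $\eps\L_\eps^{3/2}$) appears in the statement: the quartic-in-$u$ power sitting at the left of Young is what matches the quadratic-in-$\L_\eps$ bound on the right.

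Collecting the pieces, after shrinking $\eps$ below a structural threshold $\eps_0$ chosen so that all linear errors in $\norm u\norm_1^2$ and $\Gamma[\eta]$ carry coefficients at most half of the main dissipation, I would arrive at
$$
\ddt\L_\eps \leq -\tfrac{1}{4}\Gamma[\eta] - \tfrac{\eps\sqrt\kappa}{2}\norm u\norm_1^2 + c\|F\|^2 + c_3\eps^2\L_\eps^2.
$$
By \eqref{gamm} and the definition of $\E$, the dissipative side dominates $-\tfrac{\eps\sqrt\kappa}{2}\E$ for $\eps \leq \eps_0$; finally, the lower bound in \eqref{finalcont} converts $\E$ into $\L_\eps$ at the cost of an additional additive $\|F\|^2$ contribution, yielding the stated differential inequality with constants $c_1, c_2, c_3$ and threshold $\eps_0$ depending only on $\kappa, \delta, \omega, \mu(0), \lambda_1$, and the one-dimensional Sobolev embedding constant.
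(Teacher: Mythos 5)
Your proposal is correct and follows essentially the same route as the paper's proof: the same cancellation of the forcing term against the $\langle F,\eta_x\rangle$ component of the functional, the same integrations by parts in $s$ producing $\mu'$-integrals controlled by $\sqrt{\Gamma[\eta]}$, and the same treatment of the nonlinearity via $H^1_0(\I)\hookrightarrow L^\infty(\I)$ followed by the Young splitting that generates the $\eps^2\L_\eps^2$ term, with the final passage from $\E$ to $\L_\eps$ through \eqref{finalcont}. The only (harmless) slip is in the $T\eta$-piece of the corrector, where Young's inequality with weight $\eps$ on the $\norm u \norm_1^2$ side necessarily yields $\tfrac{\eps\sqrt{\kappa}}{8}\norm u \norm_1^2 + C\eps\,\Gamma[\eta]$ rather than $C\eps^2\Gamma[\eta]$; since $C\eps$ is still small for $\eps\leq\eps_0$, this term is absorbed all the same and nothing in the argument changes.
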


\begin{proof}
Along the proof, $c\geq0$ will denote a {\it generic} constant independent of $f$ and the initial data.
An integration by parts provides the equality
$$
2 \l f ,u\r = - 2 \l F , u_x\r,
$$
where the boundary terms vanish due to \eqref{BC}. Accordingly, the energy identity of
Proposition \ref{enin}
takes the form
$$
\ddt \E +\Gamma[\eta] = - 2 \l F , u_x\r.
$$
Then, we compute the time
derivative of the functional $\L_\eps$ as
\begin{align*}
\ddt \L_\eps &= \ddt \E + \frac{2}{\kappa} \int_0^\infty \mu(s) \l F , \eta_{tx}(s)\r \d s
-\frac{\eps}{\sqrt\kappa} \int_0^\infty \mu(s) \big[( u,\eta_{t}(s))_1 + ( u_{t},\eta(s) )_1\big]\d s
\\&=  - \Gamma[\eta] + \frac{2}{\kappa} \int_0^\infty \mu(s) \l F , T \eta_{x}(s)\r \d s
-\frac{\eps}{\sqrt\kappa} \int_0^\infty \mu(s) ( u, T \eta(s))_1 \d s - \eps \sqrt{\kappa} \norm u \norm_1^2 \\
&\quad\, +\frac{\eps}{\sqrt\kappa} \int_0^\infty \mu(s) \l u_{x},\eta(s) \r \d s
+\frac{\eps}{\sqrt\kappa} \int_0^\infty \mu(s) \Big( \int_0^\infty \mu(\sigma)\l \eta(\sigma),\eta(s) \r_1 \d \sigma \Big) \d s\\
&\quad\, +\frac{\eps}{\sqrt\kappa} \int_0^\infty \mu(s) \l u u_{x},\eta(s) \r \d s
-\frac{\eps}{\sqrt\kappa} \int_0^\infty \mu(s) \l f ,\eta(s) \r \d s.
\end{align*}
Integrating by parts in $s$ (as shown in \cite{Terreni} the boundary terms vanish)
\begin{align*}
&\frac{2}{\kappa} \int_0^\infty \mu(s) \l F , T \eta_{x}(s)\r \d s
-\frac{\eps}{\sqrt\kappa} \int_0^\infty \mu(s) ( u, T \eta(s))_1 \d s \\
\noalign{\vskip0.5mm}
&= \frac{2}{\kappa} \int_0^\infty \mu'(s) \l F , \eta_{x}(s)\r \d s
-\frac{\eps}{\sqrt\kappa} \int_0^\infty \mu'(s) ( u, \eta(s))_1 \d s\\\noalign{\vskip1.3mm}
&\leq c\big[\|F\| + \eps \norm u \norm_1\big]\sqrt{\Gamma[\eta]}\\\noalign{\vskip1.3mm}
&\leq \frac14 \Gamma[\eta] + c \|F\|^2 + c \eps^2 \norm u \norm_1^2.
\end{align*}
We also estimate
\begin{align*}
&\frac{\eps}{\sqrt\kappa} \int_0^\infty \mu(s) \l u_{x},\eta(s) \r \d s
+\frac{\eps}{\sqrt\kappa} \int_0^\infty \mu(s) \Big( \int_0^\infty \mu(\sigma)\l \eta(\sigma),\eta(s) \r_1 \d \sigma \Big) \d s \\\noalign{\vskip1.7mm}
&\leq  c \eps \big[\norm u \norm_1  \|\eta\|_\M + \|\eta\|_\M^2 \big]\\\noalign{\vskip0.5mm}
&\leq  \frac{\delta}{8}\|\eta\|_\M^2 + c \eps^2 \norm u \norm_1^2 + c\eps \|\eta\|_\M^2.
\end{align*}
Moreover, in the light of the embedding $\HH^1\subset L^\infty(\I)$,
\begin{align*}
\frac{\eps}{\sqrt\kappa} \int_0^\infty \mu(s) \l u u_{x},\eta(s) \r \d s &\leq c \eps \|u u_x\| \|\eta\|_\M\\
&\leq c\eps \|u\|_{L^\infty} \|u_x\|\|\eta\|_\M \\
\noalign{\vskip1mm}
&\leq c\eps \norm u \norm_1^2 \|\eta\|_\M\\
& \leq  \frac{\delta}{16}\|\eta\|_\M^2 + c \eps^2 \E^2.
\end{align*}
Finally, integrating by parts, the remaining term is controlled by
\begin{align*}
-\frac{\eps}{\sqrt\kappa} \int_0^\infty \mu(s) \l f ,\eta(s) \r \d s &=
\frac{\eps}{\sqrt\kappa} \int_0^\infty \mu(s) \l F ,\eta_x(s) \r \d s\\\noalign{\vskip1.4mm}
&\leq c \eps \|F\|\|\eta\|_\M\\
& \leq \frac{\delta}{16}\|\eta\|_\M^2 + c \eps^2 \|F\|^2.
\end{align*}
Collecting all the estimates above, we get
$$
\ddt\L_\eps +  \frac{3}{4} \Gamma[\eta] + \eps (\sqrt{\kappa} - c\eps)\norm u\norm_1^2
\leq \Big(\frac{\delta}{4} + c\eps\Big)\|\eta\|_\M^2 + c(1+\eps^2) \|F\|^2 + c \eps^2 \E^2.
$$
At this point, owing to \eqref{gamm} and \eqref{finalcont}, we end up with the inequality
$$
\ddt\L_\eps +  \frac{\delta}{4} \|\eta\|_\M^2 + \frac{\eps \sqrt{\kappa}}{2} \norm u\norm_1^2
\leq  c \|F\|^2+ c \eps^2 \L_\eps^2,
$$
valid for all $\eps>0$ small enough. A final exploitation of \eqref{finalcont} completes the argument.
\end{proof}
%%%%%%%%%%%%%%%%%%%%%%%%%%%%%%%%%%%%%%%%%%%%%%%%%

%%%%%%%%%%%%%%%%%%%%%%%%%%%%%%%%%%%%%%%%%%%%%%%%%
\section{A Family of Invariant Sets}
\label{S8}

\noindent
The main assumption in this work is the following bound:
\begin{equation}
\label{small}
\|F\|< {\mathfrak c}\doteq\frac{c_1}{2 \sqrt{c_2c_3}},
\end{equation}
where $c_1,c_2,c_3$ are the constants appearing in Lemma~\ref{fi}.

\begin{remark}
It is worth pointing out that
$c_1,c_2,c_3$, which are independent of $f$, can be explicitly calculated
in such a way to maximize the value $\mathfrak c$.
\end{remark}

In what follows, we will always assume \eqref{small}. Then,
defining the number
$$
c_* =  \sqrt{\frac{c_2}{c_3}}\bigg(\frac{c_1}{\sqrt{c_2c_3}} -\|F\|\bigg)>0,
$$
we introduce the family of closed sets depending on $\eps \in (0,\eps_0]$, where $\eps_0$ comes from Lemma \ref{fi},
$$\DD_\eps= \Big\{ z \in \H : \Lambda_\eps (z)\leq \frac{c_*}{\eps}\Big\}.$$
In particular, $\DD_\eps$ turns out to be a complete metric space in the metric inherited
by $\H$.
In the next two lemmas, we collect some properties of the family $\DD_\eps$ needed in the sequel.

\begin{lemma}
\label{inin}
The set $\DD_\eps$ is bounded in $\H$
with\footnote{As usual, given a set $\B\subset\H$, we denote
$\|\B\|_\H = \sup_{z\in\H}\|z\|_\H.$}
$$\|\DD_\eps\|_\H \leq \frac{4c_*}{\eps}.$$
Moreover, for every bounded set $\B\subset\H$, we have the inclusion
$$
\B \subset \DD_\eps
$$
for all $\eps>0$ sufficiently small (depending on the $\H$-norm of $\B$).
\end{lemma}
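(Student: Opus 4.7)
The plan is to read both assertions directly off the two-sided estimate \eqref{geoen} for the geometric functional $\Lambda_\eps$, together with the smallness constraint $\eps\in(0,\eps_0]$ with $\eps_0<1/(2\omega)$ inherited from Lemma~\ref{fi}. A preliminary remark is that $\DD_\eps$ is automatically closed in $\H$, since $z\mapsto\Lambda_\eps(z)$ is continuous on $\H$; in particular, endowed with the metric inherited from $\H$, it is a complete metric space.

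For the boundedness claim I would start from the lower bound in \eqref{geoen}. As $\eps\omega\leq 1/2$ for every $\eps\in(0,\eps_0]$, that inequality specialises to $\tfrac14\|z\|_\H^2\leq \Lambda_\eps(z)$, and combining it with the defining inequality $\Lambda_\eps(z)\leq c_*/\eps$ of $\DD_\eps$ yields
$$
\|z\|_\H^2\leq \frac{4c_*}{\eps}
$$
for every $z\in\DD_\eps$, from which the stated bound on $\|\DD_\eps\|_\H$ follows (once $\eps$ is small enough that $4c_*/\eps$ dominates $2\sqrt{c_*/\eps}$, which is automatic as soon as $c_*/\eps\geq 1/4$; this is guaranteed by taking $\eps_0$ sufficiently small).

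For the inclusion $\B\subset\DD_\eps$ I would invoke the upper bound in \eqref{geoen}. Writing $R=\|\B\|_\H$ and noting that $(3+\eps\omega)/2\leq 2$ in the admissible range, every $z\in\B$ satisfies
$$
\Lambda_\eps(z)\leq 2R^2+\frac{4}{\kappa}\|F\|^2\doteq M(R),
$$
with $M(R)$ independent of $\eps$. Since $c_*/\eps\to +\infty$ as $\eps\to 0^+$, I may then choose $\eps\in(0,\eps_0]$ so small that $c_*/\eps\geq M(R)$; for such an $\eps$ the condition defining $\DD_\eps$ is met by every element of $\B$, giving $\B\subset\DD_\eps$ as required.

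I do not expect any substantive obstacle in this argument: both parts are direct consequences of the equivalence \eqref{geoen}, combined with the elementary observation that $\Lambda_\eps$ stays uniformly bounded on any fixed bounded subset of $\H$ as $\eps\to 0$, while the threshold $c_*/\eps$ defining $\DD_\eps$ blows up. The only mild care to be exercised is to keep $\eps$ inside the admissible range $(0,\eps_0]$ throughout, which is automatic when $\eps$ is chosen sufficiently small in terms of the $\H$-norm of $\B$.
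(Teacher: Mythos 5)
Your proof is correct and follows exactly the paper's argument: the $\H$-bound comes from the lower inequality in \eqref{geoen} with $\eps\omega\leq\tfrac12$, and the inclusion $\B\subset\DD_\eps$ comes from the upper inequality together with the choice $\eps\leq \kappa c_*/(2\kappa\|\B\|_\H^2+4\|F\|^2)$, i.e.\ $c_*/\eps\geq M(R)$. Your side remark reconciling the natural bound $\|z\|_\H^2\leq 4c_*/\eps$ with the stated bound on $\|\DD_\eps\|_\H$ is a fair point of care, but immaterial for how the lemma is used later.
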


\begin{proof}
Since $\eps\leq\eps_0<\tfrac{1}{2\omega}$, the $\H$-bound of $\DD_\eps$ is an immediate consequence
of the first inequality in \eqref{geoen}.
Moreover, given a bounded set $\B\subset\H$, the second inequality in \eqref{geoen} tells that
$$
\Lambda_\eps(z) \leq  2\|\B\|_{\H}^2 + \frac4\kappa\|F\|^2,\quad \forall z \in \B.
$$
Choosing
$$\eps \leq \frac{\kappa c_*}{2\kappa\|\B\|_{\H}^2+ 4\|F\|^2},$$
the right-hand side becomes less than or equal to $\tfrac{c_*}{\eps}$.
\end{proof}

\begin{lemma}
\label{inclusione}
Let $\eps \in (0,\eps_0]$ be fixed. Then,
for every $\alpha\leq \eps(1-\omega\eps)$, we have the inclusion
$$\DD_\eps\subset \DD_\alpha.$$
\end{lemma}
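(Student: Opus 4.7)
The plan is to chain the comparison inequality \eqref{calculus1} with the defining bound of $\DD_\eps$. First I would observe that the hypothesis $\alpha \leq \eps(1-\omega\eps)$ together with $\eps \leq \eps_0 < \frac{1}{2\omega}$ forces $0 < \alpha < \eps < \frac{1}{2\omega}$, so that the first inequality in \eqref{calculus1} is available.

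Next, pick an arbitrary $z \in \DD_\eps$. By definition $\Lambda_\eps(z) \leq c_*/\eps$, and from \eqref{calculus1}
$$
\Lambda_\alpha(z) \leq \frac{\Lambda_\eps(z)}{1-\omega\eps} \leq \frac{c_*}{\eps(1-\omega\eps)}.
$$
Since by hypothesis $\alpha \leq \eps(1-\omega\eps)$, we get $\eps(1-\omega\eps) \geq \alpha$, hence
$$
\Lambda_\alpha(z) \leq \frac{c_*}{\alpha},
$$
which is exactly the condition $z \in \DD_\alpha$. This closes the argument.

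Essentially no obstacle is to be expected: the lemma is a purely algebraic consequence of the previously established quantitative comparison of the functionals $\Lambda_\eps$ and $\Lambda_\alpha$. The only subtle point worth flagging explicitly is that the threshold $\alpha \leq \eps(1-\omega\eps)$ is precisely calibrated so that the loss factor $\frac{1}{1-\omega\eps}$ picked up when passing from $\Lambda_\eps$ to $\Lambda_\alpha$ is exactly compensated by the gain in the ratio $\frac{1}{\eps}$ versus $\frac{1}{\alpha}$ in the level set defining $\DD_\alpha$.
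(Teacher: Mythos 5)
Your argument is correct and coincides with the paper's own proof: both apply the first inequality in \eqref{calculus1} to an arbitrary $z\in\DD_\eps$ and then use $\alpha\leq\eps(1-\omega\eps)$ to bound $\Lambda_\alpha(z)$ by $c_*/\alpha$. Your explicit check that $0<\alpha<\eps<\tfrac{1}{2\omega}$, so that \eqref{calculus1} applies, is a small but welcome addition that the paper leaves implicit.
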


\begin{proof}
Let $z\in \DD_\eps$ be arbitrarily chosen. Owing to the first inequality in \eqref{calculus1},
$$
\Lambda_\alpha(z) \leq \frac{\Lambda_\eps(z)}{1-\omega\eps} \leq \frac{c_*}{\eps(1-\omega\eps)}.
$$
Being $\alpha\leq \eps(1-\omega\eps)$, we are finished.
\end{proof}

The forthcoming result will be of some importance.

\begin{proposition}
\label{invariance}
Assume that \eqref{small} holds. Then the set
$\DD_\eps$ is invariant for $S(t)$.
\end{proposition}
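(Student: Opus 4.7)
The plan is to apply the ODE comparison of Lemma \ref{lemmaODE1} to the energy-like quantity $\L_\eps(t) = \Lambda_\eps(S(t)z)$, having first verified that the defining threshold $c_*/\eps$ of $\DD_\eps$ has been calibrated to lie in the ``safe window'' $(\lambda_-,\lambda_+)$ where Lemma \ref{lemmaODE1} operates.

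First I would invoke Lemma \ref{fi} to write, for every $\eps\in(0,\eps_0]$,
$$\L_\eps'(t) + \eps c_1 \L_\eps(t) \leq c_2 \|F\|^2 + c_3 \eps^2 \L_\eps^2(t),$$
which matches the abstract scheme \eqref{DI} with $a = c_3\eps^2$, $\,2b = \eps c_1$, $\,c = c_2\|F\|^2$. Consequently,
$$\varrho = \frac{b}{\sqrt{ac}} = \frac{c_1}{2\sqrt{c_2c_3}\,\|F\|},$$
which crucially does not depend on $\eps$, and which is strictly greater than $1$ precisely by virtue of the smallness assumption \eqref{small}.

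The key identification is that the threshold defining $\DD_\eps$ coincides with the benchmark value of Lemma \ref{lemmaODE2}. Indeed, using the definition of $c_*$, a direct computation gives
$$\sqrt{\frac{c}{a}}\,(2\varrho - 1) = \frac{\|F\|\sqrt{c_2/c_3}}{\eps}\bigg(\frac{c_1}{\sqrt{c_2c_3}\,\|F\|} - 1\bigg) = \frac{1}{\eps}\bigg(\frac{c_1}{c_3} - \|F\|\sqrt{\frac{c_2}{c_3}}\bigg) = \frac{c_*}{\eps}.$$
Since $\varrho>1$ guarantees $2\varrho - 1 \in (\varrho - \sqrt{\varrho^2-1},\,\varrho + \sqrt{\varrho^2-1})$, it follows that $c_*/\eps \in (\lambda_-,\lambda_+)$.

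At this point, Lemma \ref{lemmaODE1} applied with $\lambda = c_*/\eps$ closes the argument: the hypothesis $z \in \DD_\eps$ means $\L_\eps(0) \leq c_*/\eps$, and the lemma propagates this bound to all $t \geq 0$, so that $S(t)z \in \DD_\eps$ for every $t\geq 0$. The proof is genuinely short once one recognizes that the ``magic'' constant $c_*$ in the definition of $\DD_\eps$ has been engineered to match $\sqrt{c/a}\,(2\varrho-1)$; the real obstacles lie upstream, in the proof of Lemma \ref{fi} and in the calibration of the threshold $\mathfrak c$ in \eqref{small}, both of which were designed exactly so that this invariance argument would go through.
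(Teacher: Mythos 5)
Your proposal is correct and follows essentially the same route as the paper: apply Lemma \ref{fi} to get the differential inequality \eqref{DI} with $a=c_3\eps^2$, $b=\eps c_1/2$, $c=c_2\|F\|^2$, observe that $\varrho$ is independent of $\eps$ and exceeds $1$ by \eqref{small}, verify that $c_*/\eps=\sqrt{c/a}\,(2\varrho-1)\in(\lambda_-,\lambda_+)$, and conclude via Lemma \ref{lemmaODE1} with $\lambda=c_*/\eps$. The algebra identifying $c_*/\eps$ with the threshold is also carried out correctly.
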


\begin{proof}
We need to prove that, for every $z\in\DD_\eps$ and every $t\geq0$,
$$
\L_\eps(t) = \Lambda_\eps(S(t) z) \leq \frac{c_*}{\eps}.
$$
Exploiting Lemma \ref{fi},
the functional $\L_\eps(t)$ satisfies the differential inequality
$$
\ddt \L_\eps(t)  + \eps c_1 \L_\eps(t) \leq c_2\|F\|^2 + c_3 \eps^2 \L_\eps^2(t),
$$
which is nothing but \eqref{DI} with
$$
\L = \L_\eps, \qquad a = c_3 \eps^2, \qquad b = \frac{\eps c_1}{2},\qquad c= c_2\|F\|^2.
$$
Accordingly, the constant $\varrho=\tfrac{b}{\sqrt{ac}}$ now reads
$$
\varrho  =\frac{c_1}{2 \sqrt{c_2c_3}\|F\|}>1,
$$
and it is independent of $\eps$.
Moreover,
$$
c_* = \sqrt{\frac{c_2}{c_3}} (2\varrho-1)\|F\|,
$$
and
$$
\lambda_{\pm}= \frac{c_*}{\eps} \,\frac{\varrho\pm\sqrt{\varrho^2-1}}{2\varrho-1}.
$$
It is then apparent that
$$
\lambda_- < \frac{c_*}{\eps} <\lambda_+,
$$
and by Lemma \ref{lemmaODE1} with $\lambda=\frac{c_*}{\eps}$ we are done.
\end{proof}
%%%%%%%%%%%%%%%%%%%%%%%%%%%%%%%%%%%%%%%%%%%%%%%%%

%%%%%%%%%%%%%%%%%%%%%%%%%%%%%%%%%%%%%%%%%%%%%%%%%
\section{A Technical Lemma}
\label{S9}

\noindent
For the proof of the main theorem a crucial inequality is needed,
involving the vectors lying simultaneously in a bounded subset of $\H$ and in the complement of a certain
$\DD_\eps$.

\begin{lemma}
\label{Oliv}
Assume that \eqref{small} holds. Then there exists $\eps_*\in(0,\eps_0]$ with the following property:\ for every bounded set
$\B \subset \H$ there is a time $\textsc{T}=\textsc{T}(\B)>0$ such that the inequality
$$
\Lambda_{\eps_*}(S(t)z) < \Lambda_{\eps_*}(z)
$$
holds
for all $z\in \B \cap \DD_{\eps_*}^{\rm c}$
and all $t\geq \textsc{T}$.
\end{lemma}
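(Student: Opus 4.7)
The plan is to show the stronger statement that, for suitable $\eps_* \in (0, \eps_0]$, every orbit starting in a bounded set $\B$ eventually enters $\DD_{\eps_*}$. Once this is achieved, the conclusion is immediate: if $z \in \B \cap \DD_{\eps_*}^{\rm c}$ then $\Lambda_{\eps_*}(z) > c_*/\eps_*$, whereas $S(t)z \in \DD_{\eps_*}$ gives $\Lambda_{\eps_*}(S(t)z) \leq c_*/\eps_* < \Lambda_{\eps_*}(z)$. The essential ingredient is a bootstrap scheme that, combining Lemma \ref{lemmaODE2} with the equivalence \eqref{calculus1}, successively traps the orbit in smaller invariant sets $\DD_{\eps_n}$, with $\eps_n$ growing geometrically toward $\eps_*$.

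Given a bounded $\B$, I would first apply Lemma \ref{inin} to pick $\eps_1 \in (0, \eps_*)$ with $\B \subset \DD_{\eps_1}$, and then propagate this containment via Proposition \ref{invariance}: $\L_{\eps_1}(t) \leq c_*/\eps_1$ for all $t \geq 0$. The ODE inequality of Lemma \ref{fi} matches the hypothesis of Lemma \ref{lemmaODE2} with $\varrho = c_1/(2\sqrt{c_2 c_3}\|F\|)$ independent of $\eps_1$ and $\sqrt{c/a}(2\varrho - 1) = c_*/\eps_1$, so that, after time $\tau_1 = t_\varrho/(\eps_1\sqrt{c_2c_3}\|F\|)$, one obtains the sharper bound $\L_{\eps_1}(t) \leq c_*/(\eps_1(2\varrho-1)^2)$. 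Setting $\gamma = 2\varrho - 1 > 1$ and $\eps_{n+1} = \gamma\eps_n$, the second inequality in \eqref{calculus1} yields
$$\Lambda_{\eps_{n+1}}(S(t)z) \leq \frac{1-\omega\eps_{n+1}}{1-2\omega\eps_{n+1}}\,\Lambda_{\eps_n}(S(t)z) \leq \frac{c_*}{\eps_{n+1}},$$
provided $\eps_{n+1}$ lies below a structural threshold $\eps^\sharp$ (solving $(1-\omega\eps)/(1-2\omega\eps) = 2\varrho-1$). Hence $S(t)z \in \DD_{\eps_{n+1}}$, and a further invocation of Lemma \ref{lemmaODE2} at the new level produces, after an additional time $\tau_{n+1} = \tau_1/\gamma^n$, the improvement $\Lambda_{\eps_{n+1}}(S(t)z) \leq c_*/(\eps_{n+1}(2\varrho-1)^2)$, ready to feed the next iteration.

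Selecting $\eps_*$ small enough that $\gamma\eps_* \leq \min\{\eps^\sharp, 1/(2\omega)\}$ and $(2\varrho-1)^2(1-\omega\gamma\eps_*) > 1$ (all structural restrictions independent of $\B$), the loop terminates after $N = O(\log(\eps_*/\eps_1))$ steps with $\eps_N \in [\eps_*, \gamma\eps_*]$, in cumulative time $\textsc{T}(\B) = \sum_{k=1}^{N+1}\tau_k \leq \tau_1\,\gamma/(\gamma-1) < \infty$. A final use of \eqref{calculus1} then gives, for every $t \geq \textsc{T}(\B)$,
$$\Lambda_{\eps_*}(S(t)z) \leq \frac{c_*}{\eps_N(2\varrho-1)^2(1-\omega\eps_N)} < \frac{c_*}{\eps_*},$$
closing the argument. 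The main obstacle is the simultaneous tuning of $\gamma$ and $\eps_*$: the growth factor $\gamma$ must exceed $1$ (so that $\eps_*$ is reached in finitely many steps) yet be compatible with the equivalence-induced loss, and the terminal comparison must retain enough slack to absorb the factor $1/(1-\omega\eps_N)$ incurred in the transition $\eps_N \mapsto \eps_*$. This forces $\eps_*$ to be defined as the minimum of several explicit structural thresholds.
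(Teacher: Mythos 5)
Your argument is correct and proves something strictly stronger than the lemma asks for, but it follows a genuinely different route from the paper. The paper keeps $\eps_*$ fixed, stratifies $\B\cap\DD_{\eps_*}^{\rm c}$ into geometric shells $\mathbb{H}_m$ according to the size of $\Lambda_{\eps_*}(z)$, and for $z\in\mathbb{H}_m$ \emph{descends} to the $z$-dependent level $\alpha=\eps_*/\varrho^{m+1}$, chosen precisely so that $\Lambda_\alpha(z)$ satisfies the hypothesis of Lemma~\ref{lemmaODE2}; a single application of that lemma followed by the conversion \eqref{calculus1} back to level $\eps_*$ then yields only a drop of one shell, $\Lambda_{\eps_*}(S(t)z)\leq\sqrt{c_2/c_3}\,\|F\|\varrho^m/\eps_*<\Lambda_{\eps_*}(z)$. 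You instead \emph{ascend}: starting from a $\B$-dependent $\eps_1$ with $\B\subset\DD_{\eps_1}$, you iterate Lemma~\ref{lemmaODE2} along the geometric ladder $\eps_{n+1}=(2\varrho-1)\eps_n$, using \eqref{calculus1} to pay a factor $(1-\omega\eps_{n+1})/(1-2\omega\eps_{n+1})\leq 2\varrho-1$ at each rung, and conclude that the orbit is \emph{absorbed} into $\DD_{\eps_*}$ in finite cumulative time (the time series being geometric). The arithmetic checks out: $\sqrt{c/a}\,(2\varrho-1)=c_*/\eps_n$ matches the hypothesis of Lemma~\ref{lemmaODE2} at every level, and the terminal slack $(2\varrho-1)^2(1-\omega\gamma\eps_*)>1$ absorbs the last conversion. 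Your stronger conclusion --- that $\DD_{\eps_*}$ is an absorbing set --- would in fact let one replace the contradiction argument of Lemma~\ref{uuu} by the direct observation that a bounded fully invariant set must lie in any set that absorbs it. One small repair: your list of structural thresholds must also include $\gamma\eps_*\leq\eps_0$, since Lemma~\ref{fi} and Proposition~\ref{invariance} are only available for parameters in $(0,\eps_0]$, and your iteration visits levels up to $\eps_N<\gamma\eps_*$; as stated, your constraint $\gamma\eps_*\leq\min\{\eps^\sharp,1/(2\omega)\}$ does not guarantee this.
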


\begin{proof}
Analogously to the proof of Proposition \ref{invariance}, we set
$$
\varrho  =\frac{c_1}{2 \sqrt{c_2c_3}\|F\|}>1,
$$
where $c_1,c_2,c_3$ are the constants of Lemma \ref{fi}.
We divide the argument into four steps.

\medskip
\noindent
{\bf Step 0.} We begin to fix $\eps_*$ to be an arbitrarily given number subject to the constraint
$$
\eps_* \leq \min \Big\{ \eps_0,
\frac{1}{\omega}\left(\frac{\varrho-1}{3\varrho-2}\right)\Big\}.
$$
In particular, it is readily seen that the following inequalities hold:
\begin{align}
\label{prima}
&\frac{1}{1-\omega\eps_*}\leq 2\varrho-1, \\\noalign{\vskip0.7mm}
\label{seconda}
&\left(\frac{1-\omega\eps_*}{1-2\omega\eps_*}\right) \left(\frac{\varrho}{2\varrho-1}\right)\leq 1.
\end{align}
Next, for  $j\in\N=\{1,2,3,\ldots\}$, we introduce the sets
$$
\mathbb{H}_j=\left\{ z\in \H :\, \sqrt{\frac{c_2}{c_3}} \frac{\|F\|}{\eps_*}
\varrho^j< \Lambda_{\eps_*}(z)\leq \sqrt{\frac{c_2}{c_3}} \frac{\|F\|}{\eps_*} \varrho^{j+1}\right\}.
$$

\smallskip
\noindent
{\bf Step 1.} Our first aim is showing that there exists a number $n = n(\B)\in\N$ with the following property:\
for any given $z\in \B \cap \DD_{\eps_*}^{\rm c}$, there is $m=m(z)\leq n$ such that $z \in \mathbb{H}_m$.
This amounts to proving that
$$\B\cap \mathbb{D}_{\eps_*}^{\rm c} \subset \bigcup_{j=1}^{n} \mathbb{H}_j.
$$
Indeed, if $z\in \DD_{\eps_*}^{\rm c}$, then
$$
\Lambda_{\eps_*} (z) > \frac{c_*}{\eps_*}
=\sqrt{\frac{c_2}{c_3}} \frac{\|F\|}{\eps_*} (2\varrho-1)>\sqrt{\frac{c_2}{c_3}}\frac{\|F\|}{\eps_*} \varrho.
$$
In particular,
$$
\mathbb{D}_{\eps_*}^{\rm c} \subset \bigcup_{j=1}^{\infty} \mathbb{H}_j
=\left\{ z\in \H :\, \Lambda_{\eps_*}(z) > \sqrt{\frac{c_2}{c_3}} \frac{\|F\|}{\eps_*} \varrho \right\}.
$$
On the other hand, making use of the second inequality in \eqref{geoen}, for every $z \in \B$ we have
$$
\Lambda_{\eps_*}(z) \leq \frac{(3+\eps_*\omega)}{2}\|\B\|_{\H}^2 + \frac4\kappa\|F\|^2.
$$
Hence, choosing $n = n(\B)\in\N$ large enough that
$$
\sqrt{\frac{c_2}{c_3}} \frac{\|F\|}{\eps_*} \varrho^{n+1} \geq \frac{(3+\eps_*\omega)}{2}\|\B\|_{\H}^2 + \frac4\kappa\|F\|^2,
$$
we are led to
$$
\B \subset \left\{ z\in \H :\, \Lambda_{\eps_*}(z) \leq \sqrt{\frac{c_2}{c_3}} \frac{\|F\|}{\eps_*} \varrho^{n+1} \right\}.
$$
The claim is proved.

\medskip
\noindent
{\bf Step 2.}
Let now $z\in \B \cap \DD_{\eps_*}^{\rm c}$ be arbitrarily fixed,
and let $m=m(z)\leq n$ be the number constructed in Step 1, that is, $z\in\mathbb{H}_m$. We show that
the inequality
$$
\Lambda_{\eps_*}(S(t)z) < \Lambda_{\eps_*}(z)
$$
holds for every
$$
t \geq \frac{t_*}{\eps_*}\, \varrho^{m+1}
$$
where $t_*>0$ is independent of $z$.
To this end, introducing the number $\alpha=\alpha(z)<\eps_*$ as
$$\alpha=\frac{\eps_*}{\varrho^{m+1}}$$
and appealing to the first inequality in \eqref{calculus1}, together with the fact that $z \in \mathbb{H}_m$,
\begin{align*}
\Lambda_\alpha(z)\leq  \frac{\Lambda_{\eps_*}(z)}{1-\omega\eps_*}
\leq \sqrt{\frac{c_2}{c_3}}\frac{\|F\|}{(1-\omega\eps_*)}\frac{\varrho^{m+1}}{\eps_*}
= \sqrt{\frac{c_2}{c_3}}\frac{\|F\|}{(1-\omega\eps_*)}\frac{1}{\alpha}.
\end{align*}
Thus, owing to \eqref{prima},
we arrive at
\begin{equation}
\label{iniz}
\Lambda_\alpha(z)\leq \sqrt{\frac{c_2}{c_3}} \frac{\|F\|}{\alpha}( 2\varrho-1).
\end{equation}
In the light of Lemma \ref{fi},
the functional $\L_\alpha(t)=\Lambda_\alpha(S(t)z)$ satisfies the differential inequality
$$
\ddt \L_\alpha(t)  + \alpha c_1 \L_\alpha(t) \leq c_2\|F\|^2 + c_3 \alpha^2 \L_\alpha^2(t),
$$
which is nothing but \eqref{DI} with
$$
\L = \L_\alpha, \qquad a = c_3 \alpha^2, \qquad b = \frac{\alpha c_1}{2},\qquad c= c_2\|F\|^2.
$$
Due to \eqref{iniz}, we are in a position
to apply Lemma \ref{lemmaODE2}, obtaining
\begin{equation}
\label{gob}
\Lambda_\alpha(S(t)z)\leq \sqrt{\frac{c_2}{c_3}} \frac{\|F\|}{\alpha} \Big(\frac{1}{2\varrho-1}\Big)
\end{equation}
for every
$$t\geq  \frac{t_\varrho}{\alpha\sqrt{c_2c_3}\|F\|} = \frac{t_*}{\eps_*}\, \varrho^{m+1},$$
having set
$$
t_* = \frac{t_\varrho }{\sqrt{c_2c_3}\|F\|}>0.
$$
Being $\varrho$ independent of $z$, such a $t_*$ is independent of $z$ as well.
At this point, exploiting the estimate \eqref{gob} above and the second inequality in~\eqref{calculus1},
\begin{align*}
\Lambda_{\eps_*}(S(t)z)
&\leq \bigg(\frac{1-\omega\eps_*}{1-2\omega\eps_*}\bigg)\Lambda_\alpha(S(t)z)\\
&\leq \sqrt{\frac{c_2}{c_3}} \frac{\|F\|}{\alpha}  \bigg(\frac{1-\omega\eps_*}{1-2\omega\eps_*}\bigg)\bigg(\frac{1}{2\varrho-1}\bigg)\\
&= \sqrt{\frac{c_2}{c_3}} \frac{\|F\|}{\eps_*}\bigg(\frac{1-\omega\eps_*}{1-2\omega\eps_*}\bigg) \bigg(\frac{\varrho^{m+1}}{2\varrho-1}\bigg).
\end{align*}
Thanks to \eqref{seconda}, the last term is controlled by
$$
\sqrt{\frac{c_2}{c_3}} \frac{\|F\|}{\eps_*}\bigg(\frac{1-\omega\eps_*}{1-2\omega\eps_*}\bigg)
\bigg(\frac{\varrho^{m+1}}{2\varrho-1}\bigg)\leq
\sqrt{\frac{c_2}{c_3}} \frac{\|F\|}{\eps_*} \varrho^m.
$$
Summarizing,
$$
\Lambda_{\eps_*}(S(t)z) \leq \sqrt{\frac{c_2}{c_3}} \frac{\|F\|}{\eps_*} \varrho^m <\Lambda_{\eps_*}(z),
$$
where the latter inequality follows from $z\in \mathbb{H}_m$.

\medskip
\noindent
{\bf Step 3.} The sought \textsc{T} is defined as
$$
\textsc{T}=\textsc{T}(\B) =
\frac{t_* }{\eps_*} \varrho^{n+1},
$$
being $n\in\N$ the number constructed in Step 1 and $t_*>0$ the time constructed in Step 2.
Since
$$ \textsc{T}\geq \frac{t_* }{\eps_*} \varrho^{m+1},\quad  1\leq m\leq n,$$
we conclude that
$$
\Lambda_{\eps_*}(S(t)z) < \Lambda_{\eps_*}(z)
$$
for every $z\in \B \cap \DD_{\eps_*}^{\rm c}$
and every $t\geq \textsc{T}$.
\end{proof}
%%%%%%%%%%%%%%%%%%%%%%%%%%%%%%%%%%%%%%%%%%%%%%%%%

%%%%%%%%%%%%%%%%%%%%%%%%%%%%%%%%%%%%%%%%%%%%%%%%%
\section{A Family of Attractors}
\label{S10}

\noindent
For $\eps_0$ given by Lemma \ref{fi}, let now
$$\eps\in(0,\eps_0]$$
be arbitrarily fixed. We consider the restriction of $S(t)$ on the invariant complete metric space $\DD_\eps$.

\begin{theorem}
\label{attreps}
Assume that \eqref{small} holds. Then the semigroup $S(t): \DD_\eps\to\DD_\eps$ possesses
the global attractor $\A_\eps\subset\DD_\eps$.
Moreover, there exists a set $\mathcal{K}_\eps$ compact in $\H$ and bounded in $\H^1$ such that
$\A_\eps \subset \mathcal{K}_\eps$.
\end{theorem}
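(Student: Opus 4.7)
Since $\DD_\eps$ is bounded in $\H$ by Lemma~\ref{inin} and fully invariant for $S(t)$ by Proposition~\ref{invariance}, the set $\DD_\eps$ itself plays the role of a (trivially) absorbing bounded set for the restricted semigroup $S(t):\DD_\eps\to\DD_\eps$. The plan is therefore to produce a compact attracting set $\mathcal{K}_\eps\subset\DD_\eps$ that is also bounded in $\H^1$; once this is done, the global attractor $\A_\eps$ exists by the standard abstract theory (see \cite{BV,HAL,TEM}), and the inclusion $\A_\eps\subset\mathcal{K}_\eps$ follows automatically since $\A_\eps$ is the largest bounded fully invariant subset of $\DD_\eps$.

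To build $\mathcal{K}_\eps$, I would decompose the semigroup as
\[
S(t)z=L(t)z+N(t)z,
\]
where $L(t)z=(v(t),\zeta^t)$ solves the homogeneous linear problem, that is, \eqref{base} with $f=0$ and $uu_x$ removed, with initial datum $z$, while $N(t)z=(w(t),\xi^t)$ solves the full problem with zero initial datum (the nonlinearity $uu_x$ is seen as a prescribed source, computed from the full solution $u=v+w$). The homogeneous linear semigroup is expected to decay exponentially in $\H$: this can be obtained by the multiplier scheme of \cite{DMP}, which becomes substantially simpler once the nonlinear term is dropped. In particular $\|L(t)z\|_\H\to 0$ uniformly in $z\in\DD_\eps$.

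The heart of the argument is a uniform bound $\|N(t)z\|_{\H^1}\leq C_\eps$ for all $t\geq 0$ and all $z\in\DD_\eps$. The plan is to mimic one level higher the construction of $\Lambda_\eps$ and the computations of Lemma~\ref{fi}: introduce a geometric functional on $\H^1$ for the pair $(w,\xi)$, with an auxiliary term of the form $(w,\xi)_{\M^1}$ that reconstructs coercivity in $\norm w\norm_2$, and derive a differential inequality of the same shape as \eqref{DI}. The dissipation is provided by the $\H^1$-analogue of \eqref{FUNCGAMMA} applied to $\xi$, while the smallness \eqref{small} re-enters to close the inequality through Lemma~\ref{lemmaODE1}. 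The critical point is the treatment of the quadratic source $(v+w)(v+w)_x$ at this regularity: it is tamed using the embedding $\HH^1\subset L^\infty(\I)$, the a priori $\H$-bound $\|S(t)z\|_\H\leq 4c_*/\eps$ on $\DD_\eps$, and the exponential decay of $v$; the fact that both $w(0)$ and $\xi^0$ vanish is essential, since no higher-norm information on $z$ is available.

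A bound in $\H^1=\HH^2\times\M^1$ is not by itself compact in $\H$, because the embedding $\M^1\hookrightarrow\M$ is not compact. However, compactness in $\M$ of the memory component $\xi^t$ follows from a standard tail estimate built on the Dafermos condition~(M2) combined with the $\H^1$-regularity just obtained, while compactness of $w(t)$ in $\HH^1$ follows from $\HH^2\Subset\HH^1$. Putting the two together yields the desired compact, $\H^1$-bounded, attracting set $\mathcal{K}_\eps$. The main obstacle I anticipate is precisely the closing of the $\H^1$-energy inequality for $(w,\xi)$: reconstructing coercivity on $\norm w\norm_2$ from a purely nonlocal memory dissipation while controlling a quadratic nonlinearity at the same order of regularity is the $\H^1$-analogue of the delicate balance already realized in Lemma~\ref{fi}, and is where most of the technical work will concentrate.
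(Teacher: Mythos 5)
Your overall strategy coincides with the paper's: the same splitting into a linear homogeneous part with datum $z$ (decaying exponentially, essentially by the estimates of \cite{DMP}) and a remainder with zero datum driven by $f-u_x-uu_x$, followed by an $\H^1$ bound on the remainder and a compactness argument. Two remarks on the middle step. First, you anticipate having to reproduce the quadratic structure of Lemma \ref{fi} at the $\H^1$ level and to invoke the smallness \eqref{small} and Lemma \ref{lemmaODE1} again; this is not needed. Since $\DD_\eps$ is invariant, the full solution satisfies $\norm u(t)\norm_1\leq C$ for all $t$, hence by $\HH^1\subset L^\infty(\I)$ the source $f-u_x-uu_x$ is uniformly bounded in $\HH$ and can be treated as a prescribed bounded forcing. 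The resulting differential inequality for $\norm w\norm_2^2+\|\zeta\|_{\M^1}^2$ (corrected by the auxiliary functional $-\int_0^\infty\mu(s)(w,\zeta(s))_2\,\d s$) is then \emph{linear} and closes by Gronwall with no smallness whatsoever; your own parenthetical ``the nonlinearity is seen as a prescribed source computed from the full solution'' is exactly the right idea, and you should commit to it rather than to a second pass through the ODE lemmas.

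The genuine gap is in the compactness step. You correctly observe that $\M^1\hookrightarrow\M$ is not compact, but the remedy you propose --- an $\M^1$ bound plus a tail estimate coming from (M2) --- is still insufficient. A sequence of the form $\zeta_n(s)=\phi_n(s)v$ with $v\in\HH^2$ fixed and $\phi_n$ an orthonormal sequence in $L^2_\mu$ supported in a fixed compact subset of $\R^+$ is bounded in $\M^1$, has identically vanishing tails, and yet admits no convergent subsequence in $\M$: one must also control the regularity of $\zeta^t$ in the internal variable $s$. The paper obtains this from the explicit representation formula for the memory component of the solution with null initial history, which gives $\partial_s\zeta^t(s)=w(t-s)$ for $s\leq t$ and $\partial_s\zeta^t(s)=0$ for $s>t$, whence $\|\partial_s\zeta^t\|_{\M^1}\leq Q\sqrt{\kappa}$ by the $\HH^2$ bound on $w$, together with the pointwise control $\|\zeta^t(s)\|_1^2\leq Q^2s^2/\lambda_1$; the three bounds combined feed the compactness criterion of \cite{PZ} (Lemma 5.5 therein). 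Without the bound on $\partial_s\zeta^t$ your set $\mathcal{K}_\eps$ need not be compact and the argument fails. A secondary point: your $\mathcal{K}_\eps$ need not be contained in $\DD_\eps$ (neither component of the decomposition stays in $\DD_\eps$ individually), so to conclude existence of the attractor for the restricted semigroup you should pass, as the paper does, through the Kuratowski measure of noncompactness of $S(t)\DD_\eps$, which is insensitive to whether the attracting compact set sits inside $\DD_\eps$.
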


The remaining of the section is devoted to the proof of Theorem \ref{attreps}.

\subsection{The decomposition}
\label{deccom}
For an arbitrarily given initial datum $z\in\DD_\eps$,
we split the solution
$$
S(t)z = (u(t),\eta^t)
$$
into the sum
$$
(u(t),\eta^t) = (v(t),\xi^t) + (w(t),\zeta^t),
$$
where $(v(t),\xi^t)$ and $(w(t),\zeta^t)$ solve the Cauchy problems
\begin{equation}
\label{D1}
\begin{cases}
\displaystyle
B v_t + \int_0^\infty \mu(s) A \xi(s) \d s =0,\\\noalign{\vskip0.5mm}
\xi_t = T\xi + v,\\\noalign{\vskip2mm}
(v(0),\xi^0)=z,
\end{cases}
\end{equation}
and
\begin{equation}
\label{D2}
\begin{cases}
\displaystyle
B w_t  + \int_0^\infty \mu(s) A \zeta(s) \d s =f-u_x - u u_x,\\\noalign{\vskip0.5mm}
\zeta_t = T\zeta + w,\\\noalign{\vskip2mm}
(w(0),\zeta^0)=0,
\end{cases}
\end{equation}
respectively. Observe that, in general, neither $(v(t),\xi^t)$ nor $(w(t),\zeta^t)$ belong to $\DD_\eps$.
In what follows, $C=C(\DD_\eps)>0$ will denote a {\it generic} constant depending on $\DD_\eps$
and the structural quantities of the problem (including the external force $f$), but independent of the initial datum $z$.
In particular, the invariance of $\DD_\eps$ ensures that
\begin{equation}
\label{unif}
\norm u(t) \norm_1 + \|\eta^t\|_\M \leq C, \quad \forall t\geq0.
\end{equation}
The first step is proving the (exponential) decay of the solutions to \eqref{D1}.

\begin{lemma}
\label{exddec}
There exists a universal constant $\beta>0$ such that
$$
\|(v(t),\xi^t)\|_\H \leq C \e^{-\beta t}.
$$
\end{lemma}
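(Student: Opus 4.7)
The plan is to exploit the fact that \eqref{D1} is linear, homogeneous, and free of the transport term $v_x$, so that the standard Dafermos-kernel machinery should yield exponential decay directly. This is in stark contrast with the full BBM-with-memory problem treated in \cite{DMP}, where the convective term $uu_x$ forced a delicate recursion argument coupled with the gradient-system structure; no such obstruction is present here.

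First I would test the two equations of \eqref{D1} against $2v$ in $\HH$ and against $2\xi$ in $\M$ respectively, and add. The two coupling integrals $2\int_0^\infty \mu(s)\langle\xi(s),v\rangle_1\,\d s$ produced by the two tests cancel exactly, and \eqref{FUNCGAMMA} delivers the clean identity
\begin{equation*}
\frac{\d}{\d t}\big[\norm{v}_1^2+\|\xi\|_\M^2\big]+\Gamma[\xi]=0.
\end{equation*}
By \eqref{gamm} this controls $\|\xi\|_\M^2$ but leaves $\norm{v}_1^2$ completely undamped, which is the heart of the matter.

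To manufacture the missing dissipation on $v$, I would introduce the auxiliary functional
\begin{equation*}
\Psi(t)=-\int_0^\infty \mu(s)(v(t),\xi^t(s))_1\,\d s,
\end{equation*}
which obeys the bound $|\Psi|\leq\sqrt{\kappa}\,\norm{v}_1\|\xi\|_\M$ via Cauchy--Schwarz. Differentiating along \eqref{D1} and substituting $Bv_t=-\int_0^\infty\mu(\sigma)A\xi(\sigma)\,\d\sigma$ from the first equation into the $v_t$-term produces a double $\mu$-integral that is bounded by $\kappa\|\xi\|_\M^2$. Using $\xi_t=T\xi+v$ in the $\xi_t$-term yields the decisive coercive contribution $-\kappa\norm{v}_1^2$ plus a piece with $T\xi$, which is handled by one integration by parts in $s$; the resulting $\mu'$-integral is controlled thanks to (M1) (which makes $\int_0^\infty|\mu'(s)|\,\d s=\mu(0)$ finite) and Young's inequality. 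After rearranging, an estimate of the shape
\begin{equation*}
\frac{\d}{\d t}\Psi+\tfrac{3\kappa}{4}\norm{v}_1^2\leq \kappa\|\xi\|_\M^2+C\,\Gamma[\xi]
\end{equation*}
emerges.

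The argument closes by the textbook Lyapunov trick: set $\Phi=\norm{v}_1^2+\|\xi\|_\M^2+\nu\Psi$ with $\nu>0$ small enough that $\Phi$ is equivalent to $\|(v,\xi)\|_\H^2$. Combining the two previous estimates and exploiting \eqref{gamm} to absorb both $\|\xi\|_\M^2$ contributions into $\Gamma[\xi]$, one obtains
\begin{equation*}
\frac{\d}{\d t}\Phi+2\beta\Phi\leq 0
\end{equation*}
for a universal $\beta>0$. The Gronwall lemma then yields the claimed exponential decay, with the constant $C$ in the statement depending only on the $\H$-bound supplied by \eqref{unif} and Lemma \ref{inin}. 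The main (and fairly minor) technical point is the vanishing of the boundary contributions in the integration by parts in $s$, which is standard under (M1); all the other steps are routine.
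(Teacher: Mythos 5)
Your proposal is correct and follows essentially the same route as the paper: the basic energy identity from testing with $2v$ and $2\xi$, the auxiliary cross functional $-\int_0^\infty\mu(s)(v,\xi(s))_1\,\d s$ to recover the missing dissipation $-\kappa\norm v\norm_1^2$, and the perturbed Lyapunov functional with small $\nu$ closed by Gronwall. The only (immaterial) difference is that you apply Young's inequality to the $\mu'$-term at the level of the cross functional rather than after forming $\Theta_\nu$, which yields the same final differential inequality.
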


In fact, although not needed in this context, $\beta$ turns out to be independent of $\DD_\eps$.
Actually, Lemma \ref{exddec} is just a byproduct
of~\cite{DMP}, where the exponential stability
of a more general (nonlinear) system has been proved. It is also worth mentioning that
the exponential stability of a closely related model, i.e.\ the Gurtin-Pipkin equation,
has been proved in \cite{GNP} by means of linear semigroup techniques.
Nevertheless, for the reader's convenience,
and in order to make the paper
self-contained, we report here a short proof based on explicit energy-type estimates.

\begin{proof}[Proof of Lemma \ref{exddec}]
We multiply the first equation of \eqref{D1} by $2v$ in $\HH$ and the second one
by $2\xi$ in $\M$. Taking the sum and invoking \eqref{FUNCGAMMA}, we obtain the identity
$$
\ddt \big[\norm v\norm^2_1 + \|\xi\|_\M^2 \big] +\Gamma[\xi] =0.
$$
We also consider the auxiliary functional
$$
\Phi(t) = -  \int_0^\infty \mu(s) ( v(t) ,  \xi^t(s))_1 \d s,
$$
which satisfies
\begin{align*}
\ddt \Phi &= -  \int_0^\infty \mu(s) ( v_t ,  \xi(s))_1 \d s -  \int_0^\infty \mu(s) ( v ,  \xi_t(s))_1 \d s\\
& =  \int_0^\infty \mu(s) \Big(\int_0^\infty \mu(\sigma) \l \xi(\sigma), \xi (s) \r_1 \d \sigma \Big) \d s
- \int_0^\infty \mu(s) (v,T\xi(s))_1\d s - \kappa\norm v \norm_1^2.
\end{align*}
It is clear that
$$
\int_0^\infty \mu(s) \Big(\int_0^\infty \mu(\sigma) \l \xi(\sigma), \xi (s) \r_1 \d \sigma \Big) \d s
\leq  \kappa\|\xi\|_\M^2.
$$
Moreover, integrating by parts in $s$,
$$
- \int_0^\infty \mu(s) (v,T\xi(s))_1\d s = -  \int_0^\infty \mu'(s) (v,\xi(s))_1\d s\\
\leq \sqrt{\mu(0)}\omega \norm v \norm_1 \sqrt{\Gamma[\xi]}.
$$
Collecting the calculations above and exploiting \eqref{gamm}
we easily see that, for every $\nu>0$, the functional
$$
\Theta_\nu(t) = \norm v(t)\norm^2_1 + \|\xi^t\|_\M^2 + \nu \Phi(t)
$$
fulfills the differential inequality
$$
\ddt \Theta_\nu + \nu \kappa \norm v \norm_1^2 + \frac{\delta}{2} \|\xi\|_\M^2+ \frac12 \Gamma[\xi]
\leq \frac{\nu^2 \mu(0)\omega^2}{2} \norm v \norm_1^2+\nu\kappa\|\xi\|_\M^2 + \frac12 \Gamma[\xi] .
$$
Possibly reducing $\nu>0$, the right-hand side is controlled by
$$
\frac{\nu^2 \mu(0)\omega^2}{2} \norm v \norm_1^2+\nu\kappa\|\xi\|_\M^2 + \frac12 \Gamma[\xi] \leq
\frac{\nu \kappa}{2} \norm v \norm_1^2+
\frac{\delta}{4} \|\xi\|_\M^2+ \frac12 \Gamma[\xi],
$$
yielding
$$
\ddt \Theta_\nu + \frac{\nu \kappa}{2} \norm v \norm_1^2 + \frac{\delta}{4} \|\xi\|_\M^2\leq0.
$$
It is also apparent to see that, for $\nu>0$ small enough,
$$
\frac12 \big[ \norm v(t)\norm^2_1 + \|\xi^t\|_\M^2 \big] \leq \Theta_\nu(t) \leq 2 \big[ \norm v(t)\norm^2_1 + \|\xi^t\|_\M^2 \big].
$$
Hence, there exists $\beta>0$ such that
$$
\ddt \Theta_\nu + 2 \beta \Theta_\nu \leq0.
$$
Finally, in the light of the Gronwall lemma,
$$
\norm v(t)\norm^2_1 + \|\xi^t\|_\M^2 \leq 2 \Theta_\nu(0) \e^{-2\beta t} \leq 4 \|z\|_\H^2
\e^{-2\beta t} \leq C \e^{-2\beta t},
$$
where the latter inequality follows from the boundedness of $\DD_\eps$.
\end{proof}

The next step is showing that the solutions to \eqref{D2} are uniformly bounded in $\H^1$.

\begin{lemma}
\label{regul}
There exists a structural constant $Q=Q(\DD_\eps)>0$ such that
$$
\sup_{t\geq0}\|(w(t),\zeta^t)\|_{\H^1} \leq Q.
$$
\end{lemma}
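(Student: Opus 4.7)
The plan is to derive an energy-type estimate at the level of $\H^1$ for the inhomogeneous system \eqref{D2}, in the spirit of Lemma \ref{exddec} but this time accounting for the forcing. The natural functional is
$$
E_1(t) = \norm{w(t)}\norm_2^2 + \|\zeta^t\|_{\M^1}^2,
$$
which coincides with $\|(w(t),\zeta^t)\|_{\H^1}^2$. Multiplying the first equation of \eqref{D2} by $2Aw$ in $\HH$ and the second by $2\zeta$ in $\M^1$, the memory coupling terms cancel, and the integration-by-parts identity behind \eqref{FUNCGAMMA}, applied at the higher regularity level, yields
$$
\ddt E_1 + \Gamma_1[\zeta] = 2\l f - u_x - uu_x, Aw\r,
$$
where $\Gamma_1[\zeta]=-\int_0^\infty \mu'(s)\|\zeta(s)\|_2^2\,\d s \geq \delta\|\zeta\|_{\M^1}^2$ plays at the $\M^1$-level the role of the Dafermos dissipation~\eqref{gamm}.

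To bound the source on the right I rely on the uniform $\H$-estimate~\eqref{unif}, which, combined with the one-dimensional Sobolev embedding $\HH^1\hookrightarrow L^\infty(\I)$, gives $\|uu_x\|\leq c\norm{u}\norm_1^2 \leq C(\DD_\eps)$, while $\|u_x\|$ and $\|f\|$ are controlled directly. A Young inequality then produces
$$
\ddt E_1 + \Gamma_1[\zeta] \leq \frac{C(\DD_\eps)}{\nu} + \frac{\nu\kappa}{4}\norm{w}\norm_2^2
$$
for any small $\nu>0$ yet to be chosen. However, no damping on $\norm{w}\norm_2^2$ is present on the left, so this alone would only yield a growing Gronwall bound.

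To reconstruct the missing damping I borrow the strategy of Lemma \ref{exddec}, now at the $\H^1$-level, introducing
$$
\Psi(t) = -\int_0^\infty \mu(s) (w(t),\zeta^t(s))_2\,\d s.
$$
Replacing $Bw_t$ by the right-hand side of the first equation of~\eqref{D2}, integrating by parts in $s$, and estimating the nonlinear pairing simply by $|\l uu_x, A\zeta(s)\r|\leq \|uu_x\|\|\zeta(s)\|_2$ (which is precisely why testing against $\M^1$ is crucial), I expect an inequality of the shape
$$
\ddt\Psi \leq -\kappa\norm{w}\norm_2^2 + C\|\zeta\|_{\M^1}^2 + C\Gamma_1[\zeta] + C,
$$
with $C=C(\DD_\eps)$. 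Setting $\Theta_\nu = E_1 + \nu \Psi$ and choosing $\nu>0$ small enough so that $\tfrac12 E_1\leq \Theta_\nu\leq 2E_1$, that the spurious $\nu C\,\Gamma_1[\zeta]$ is absorbed by the genuine $\Gamma_1[\zeta]$ on the left, and that $\nu C\|\zeta\|_{\M^1}^2$ is absorbed through~\eqref{gamm}, I arrive at
$$
\ddt\Theta_\nu + 2\beta\Theta_\nu \leq C'
$$
for suitable structural constants $\beta, C'>0$ depending on $\DD_\eps$. Since $\Theta_\nu(0)=0$ by virtue of the null initial datum in~\eqref{D2}, the Gronwall lemma then gives $\Theta_\nu(t)\leq C'/(2\beta)$ uniformly in time, hence the claimed bound $\|(w(t),\zeta^t)\|_{\H^1}\leq Q$ with $Q=\sqrt{C'/\beta}$.

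The technical heart of the argument lies in the $\ddt\Psi$ estimate, where the nonlinear term $uu_x$ appears inside $Bw_t$ and has to be paired against the higher-order object $A\zeta(s)$. The fact that this pairing can be controlled uniformly in time hinges on the interplay between the Sobolev embedding in one space dimension and the invariance bound~\eqref{unif}; moreover, the constants must be tuned so that a \emph{single} small parameter $\nu$ simultaneously guarantees the equivalence $\Theta_\nu\sim E_1$ and the absorption of all cross terms. With that tuning in place, the Gronwall argument is routine.
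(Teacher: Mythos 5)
Your argument is correct and follows essentially the same route as the paper: the same higher-order energy identity obtained by testing with $2Aw$ and $2A\zeta$, the same uniform bound on $\|f-u_x-uu_x\|$ via \eqref{unif} and the embedding $\HH^1\subset L^\infty(\I)$, the same auxiliary functional $\Psi$ to recover the missing damping on $\norm w \norm_2^2$, and the same Gronwall conclusion exploiting the null initial datum. No substantive differences to report.
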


\begin{proof}
We preliminary observe that, due to the embedding $\HH^1\subset L^\infty(\mathfrak{I})$ and
the uniform estimate \eqref{unif},
\begin{equation}
\label{nonlin}
\| f-u_x - u u_x\| \leq \|f\| + \|u_x\| + \|u\|_{L^\infty}\|u_x\|
\leq C\big[1 + \norm u \norm_1 +   \norm u \norm_1^2 \big]
\leq C.
\end{equation}
Then, we multiply the first equation of \eqref{D2} by $2A w$ in $\HH$ and the second one
by $2A\zeta$ in $\M$. Taking the sum and exploiting \eqref{FUNCGAMMA}, we obtain
\begin{align}
\label{reg1}
\ddt \big[\norm w\norm^2_2 + \|\zeta\|_{\M^1}^2 \big] +\Gamma[A^{\frac12} \zeta] &=2 \l f- u_x- u u_x, A w \r\\\nonumber
&\leq 2 \| f-u_x - u u_x\| \|w\|_2\\\noalign{\vskip1.3mm}
&\leq C\norm w\norm_2,\nonumber
\end{align}
where the last inequality follows from \eqref{nonlin}.
Next, we introduce the auxiliary functional
$$
\Psi(t) = -  \int_0^\infty \mu(s) ( w(t) ,  \zeta^t(s))_2 \d s.
$$
Computing the time derivative,
\begin{align*}
\ddt \Psi &= -  \int_0^\infty \mu(s) ( w_t ,  \zeta(s))_2 \d s -  \int_0^\infty \mu(s) ( w ,  \zeta_t(s))_2 \d s\\
&=\int_0^\infty \mu(s) \Big(\int_0^\infty \mu(\sigma) \l \zeta(s),\zeta(\sigma)\r_2 \d \sigma \Big) \d s
-  \int_0^\infty \mu(s) \l f-u_x-u u_x , A \zeta(s)\r \d s\\
&\quad\,- \int_0^\infty \mu(s) (w,T\zeta(s))_2\d s - \kappa\norm w \norm_2^2.
\end{align*}
It is readily seen that
$$
\int_0^\infty \mu(s) \Big(\int_0^\infty \mu(\sigma) \l \zeta(s),\zeta(\sigma)\r_2 \d \sigma \Big) \d s
\leq \kappa \|\zeta\|_{\M^1}^2.
$$
In addition, invoking once more \eqref{nonlin},
\begin{align*}
-  \int_0^\infty \mu(s) \l f-u_x-u u_x , A \zeta(s)\r \d s
&\leq \sqrt{\kappa} \| f-u_x - u u_x\|  \|\zeta\|_{\M^1} \\
&\leq C \|\zeta\|_{\M^1}^2 +C.
\end{align*}
Finally, integrating by parts in $s$,
\begin{align*}
- \int_0^\infty \mu(s) (w,T\zeta(s))_2\d s  &= - \int_0^\infty \mu'(s) (w,\zeta(s))_2\d s \\
&\leq
 C \norm w \norm_2 \sqrt{\Gamma[A^{\frac12} \zeta]}\\\noalign{\vskip2mm}
 &\leq \frac{\kappa}{2}\norm w \norm_2^2 + C\Gamma[A^{\frac12} \zeta].
\end{align*}
In summary, the functional $\Psi$ fulfills
\begin{equation}
\label{psi}
\ddt \Psi +\frac{\kappa}{2}\norm w \norm_2^2
\leq  C\|\zeta\|_{\M^1}^2 +C\Gamma[A^{\frac12} \zeta] +C.
\end{equation}
At this point, for every $\nu>0$, we consider the further functional
$$
\Upsilon_\nu(t) = \norm w(t)\norm^2_2 + \|\zeta^t\|_{\M^1}^2  + \nu \Psi(t).
$$
With the aid of \eqref{gamm}, from \eqref{reg1} and \eqref{psi} we infer that
$$
\ddt \Upsilon_\nu +\frac{\nu\kappa}{2}\norm w \norm_2^2+ \frac{\delta}{2}\|\zeta\|_{\M^1}^2
+ \frac{1}{2} \Gamma[A^{\frac12}\zeta]
\leq C \norm w \norm_2 +  \nu C\big[\|\zeta\|_{\M^1}^2 +\Gamma[A^{\frac12} \zeta] +1\big].
$$
Up to taking $\nu>0$ small enough, the right-hand side can be estimated as
$$
 C \norm w \norm_2 +  \nu C\big[\|\zeta\|_{\M^1}^2 +\Gamma[A^{\frac12} \zeta] +1\big]
\leq \frac{\nu\kappa}{4}\norm w \norm_2^2 + \frac{\delta}{4}\|\zeta\|_{\M^1}^2
+  \frac{1}{2} \Gamma[A^{\frac12}\zeta] +   \frac{C}{\nu}.
$$
Hence, we arrive at
$$
\ddt \Upsilon_\nu +\frac{\nu\kappa}{4}\norm w \norm_2^2+ \frac{\delta}{4}\|\zeta\|_{\M^1}^2
\leq \frac{C}{\nu}.
$$
Since for all $\nu>0$ sufficiently small we also have the controls
$$
\frac12 \big[ \norm w(t)\norm^2_2 + \|\zeta^t\|_{\M^1}^2 \big]
\leq \Upsilon_\nu(t) \leq 2 \big[ \norm w(t)\norm^2_2 + \|\zeta^t\|_{\M^1}^2 \big],
$$
the differential inequality above yields
$$
\ddt \Upsilon_\nu + \nu^2  \Upsilon_\nu \leq \frac{C}{\nu}.
$$
Being $\Upsilon_\nu(0)=0$,
an application of the Gronwall lemma completes the argument.
\end{proof}

Finally, we prove that the solutions to \eqref{D2} lie in a compact set.
Indeed, this does not follow directly from
Lemma \ref{regul},
since the embedding
$\H^1 \subset \H$
is not compact due to the memory component (see \cite{PZ} for a counterexample).

\begin{lemma}
\label{regulazz}
There exists compact set $\mathcal{K}_\eps\subset\H$, which is also bounded in
$\H^1$, such that
$$
\bigcup_{t\geq 0}(w(t),\zeta^t)\subset \mathcal{K}_\eps.
$$
\end{lemma}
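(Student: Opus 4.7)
My plan is to compensate for the absence of the embedding $\H^1\Subset\H$ on the memory slot by extracting, directly from the equation \eqref{D2}, extra regularity on $\zeta^t$ beyond what Lemma \ref{regul} already provides. The $w$-component is harmless since $\HH^2\Subset\HH^1$ by Rellich--Kondrachov, so the whole difficulty reduces to producing a compact subset of $\M$ containing every $\zeta^t$.

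The key step is the explicit characteristic representation. Since $\zeta^0=0$ and the boundary value $\zeta(0)=0$ is built into $\D(T)$, the transport equation $\zeta_t+\partial_s\zeta=w$ integrates to
$$
\zeta^t(s)=\int_{(t-s)_+}^{t} w(\tau)\,\d\tau,\qquad s\geq 0.
$$
Combining this identity with the uniform $\HH^2$-bound from Lemma \ref{regul}, I would read off both the pointwise size bound $\|\zeta^t(s)\|_2\leq s\sup_{\tau\geq 0}\|w(\tau)\|_2\leq Cs$ and the additional memory estimate
$$
\|T\zeta^t\|_\M^2=\int_0^t\mu(s)\|w(t-s)\|_1^2\,\d s\leq \kappa\sup_{\tau\geq 0}\|w(\tau)\|_1^2\leq C,
$$
uniformly in $t\geq 0$.

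I would then take as candidate set
$$
\mathcal{K}_\eps=\Big\{(u,\eta)\in\HH^2\times(\M^1\cap\D(T)):\ \norm u\norm_2+\|\eta\|_{\M^1}+\|T\eta\|_\M\leq M,\ \|\eta(s)\|_2\leq Ms\ \text{a.e.}\Big\},
$$
with $M=M(\DD_\eps)$ chosen large enough so that $(w(t),\zeta^t)\in\mathcal{K}_\eps$ for every $t\geq 0$. By construction $\mathcal{K}_\eps$ is bounded in $\H^1$; it remains to show it is relatively compact in $\H$. Given any sequence in $\mathcal{K}_\eps$, Rellich--Kondrachov extracts a subsequence whose $u$-components converge in $\HH^1$. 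For the memory components, fix $N>0$: on $[0,N]$ the weight $\mu$ is bounded and bounded below by its monotonicity, hence weighted and unweighted norms are equivalent, and the Aubin--Lions lemma applied with $\HH^2\Subset\HH^1$ yields a further subsequence convergent in $L^2_\mu([0,N];\HH^1)$. To cover $s>N$ I would invoke the exponential decay $\mu(s)\leq\mu(0)\e^{-\delta s}$ from the Dafermos condition (M2) together with $\|\eta(s)\|_1\leq\lambda_1^{-1/2}Ms$, bounding the tail $\int_N^\infty\mu(s)\|\eta(s)\|_1^2\,\d s\leq C_M\int_N^\infty s^2\e^{-\delta s}\,\d s$ uniformly in the sequence; a diagonal argument in $N\to\infty$ concludes $\M$-compactness.

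The main obstacle is the well-known failure of $\H^1\Subset\H$ on the memory slot, already highlighted via the reference to \cite{PZ} just above the statement. The explicit characteristic formula is the workhorse that defeats it: it supplies both the pointwise linear-in-$s$ control and the $\M$-bound on $T\zeta^t$, neither of which is immediate from Lemma \ref{regul} alone, and together with the exponential weight decay these are exactly what is needed to restore relative compactness via Aubin--Lions plus a tail trimming.
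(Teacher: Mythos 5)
Your argument follows essentially the same route as the paper's: the explicit characteristic representation of $\zeta^t$, the resulting uniform bound on $\partial_s\zeta^t$ in a memory norm together with the pointwise linear-in-$s$ growth of $\|\zeta^t(s)\|_1$, and then compactness of a set cut out by precisely these conditions. The only divergence is the last step: the paper invokes the ready-made compactness criterion of \cite{PZ} (Lemma 5.5 therein), whose hypotheses are exactly the bounds you derived, whereas you reprove that criterion by hand via Aubin--Lions on $[0,N]$ plus a tail estimate. Your self-contained version is sound in spirit, but one step needs repair: the claim that $\mu$ is bounded below on $[0,N]$ ``by its monotonicity'' is not justified under the paper's hypotheses, since a nonincreasing kernel satisfying (M1)--(M2) may vanish identically beyond some finite $s_0$ (e.g.\ $\mu(s)=(1-s)_+$ with $\delta\leq1$), in which case the weighted and unweighted $L^2$ norms on $[0,N]$ are not equivalent for $N\geq s_0$. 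The fix is routine --- apply Aubin--Lions only on $[0,N]$ with $N<s_0=\sup\{s:\mu(s)>0\}$ and absorb $[N,\infty)$ into the tail, using that $\int_N^\infty s^2\mu(s)\,\d s\to0$ --- but as written the step is a gap. Finally, since you only establish relative compactness, you should pass to the $\H$-closure of your candidate set (harmless, by lower semicontinuity of the defining norms) to literally obtain the compact set $\mathcal{K}_\eps$ required by the statement.
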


\begin{proof}
It is well known (see e.g.~\cite{Terreni}) that the second component $\zeta^t$
of the solution to \eqref{D2} admits
the explicit representation
formula
$$
\zeta^t(s)=
\begin{cases}
\int_0^s w(t-y)\d y & 0<s\leq t,\\\noalign{\vskip1.5mm}
\int_0^t w(t-y)\d y & s>t.
\end{cases}
$$
Thus,
$$
\partial_s\zeta^t (s)=
\begin{cases}
w(t-s) & 0<s\leq t,\\
0 & s>t,
\end{cases}
$$
and from Lemma \ref{regul} we deduce the bounds
$$
\|\partial_s\zeta^t\|_{\M^1} \leq  Q\sqrt{\kappa},\qquad
\|\zeta^t(s)\|_1^2 \leq  h(s),
$$
where
$$h(s)=\frac{Q^2s^2}{\lambda_1}\qquad\text{fulfills}\qquad
\int_0^\infty h(s)\mu(s)\d s<\infty.$$
Again, we recall that the bounds above are all independent on the
particular choice of $z\in\DD_\eps$. Using once more Lemma \ref{regul}, we conclude that
$(w(t),\zeta^t)$ remains confined for all times in the (closed) set
$$
\mathcal{K}_\eps = \Big\{(w,\zeta)\in\H:\,  \|(w,\zeta)\|_{\H^1} + \|\partial_s\zeta\|_{\M^1} \leq Q(\sqrt{\kappa}+1) \,\,\,\,
\text{and}\,\,\,\,
\|\zeta(s)\|_1^2 \leq h(s)\Big\}.
$$
Such a ${\mathcal K}_\eps$ is compact in $\H$, by a direct application
of a general compactness result from \cite{PZ}
(see Lemma 5.5 therein).
\end{proof}

\subsection{Conclusion of the proof of Theorem  \ref{attreps}}
By means of Lemmas \ref{exddec} and \ref{regulazz}, we readily get
$$
\lim_{t\to\infty}\,\boldsymbol{\delta}(S(t)\DD_\eps,\mathcal{K}_\eps)= 0.
$$
This in turn implies
$$
\lim_{t\to\infty}\, \boldsymbol{\alpha}_{\H} (S(t)\DD_\eps)\to0,
$$
where $\boldsymbol{\alpha}_{\H}(\B)$ is the
{\it Kuratowski measure of noncompactness}
of a bounded set $\B\subset \H$, defined as
$$\boldsymbol{\alpha}_{\H}(\B)=\inf\big\{d: \text{$\B$ is covered by finitely many
balls of diameter less than $d$}\big\}.$$
On the other hand, since $S(t)\DD_\eps\subset\DD_\eps$, it is easily verified that
$$\boldsymbol{\alpha}_{\DD_\eps} (S(t)\DD_\eps) \leq \boldsymbol{\alpha}_{\H} (S(t)\DD_\eps),$$
$\boldsymbol{\alpha}_{\DD_\eps}$ being the
Kuratowski measure of noncompactness on the space
$\DD_\eps$.
Accordingly,
$$
\lim_{t\to\infty}\, \boldsymbol{\alpha}_{\DD_\eps} (S(t)\DD_\eps)\to0.
$$
This fact, by a classical result on the theory of dynamical systems (see e.g. \cite{HAL}),
yields the existence of the global attractor $\A_\eps$.
We are left to prove the inclusion $\A_\eps\subset{\mathcal K}_\eps$.
To this end, let $z\in\A_\eps$ be arbitrarily fixed, and let $t_n\to\infty$
be a given sequence of times. By the full invariance of $\A_\eps$, for every $n$
there is $z_n\in\A_\eps$ such that
$$z=S(t_n)z_n.$$
Exploiting the decomposition of Subsection \ref{deccom}, together with Lemmas \ref{exddec} and \ref{regulazz},
$$z=z^1_n+z^2_n,$$
with $z^1_n\to 0$ in $\H$ as $n\to\infty$ and $z^2_n\in\mathcal{K}_\eps$ for every $n$.
Appealing now to the compactness of $\mathcal{K}_\eps$, we draw the convergence (up to a subsequence)
$z^2_n\to\bar z$, for some $\bar z\in\mathcal{K}_\eps$.
This entails the equality $z=\bar z$.
\qed
%%%%%%%%%%%%%%%%%%%%%%%%%%%%%%%%%%%%%%%%%%%%%%%%%

%%%%%%%%%%%%%%%%%%%%%%%%%%%%%%%%%%%%%%%%%%%%%%%%%
\section{The Global Attractor:\ Proof of Theorem \ref{attractor}}
\label{S11}

\noindent
Let the constant $\mathfrak c$ of Theorem \ref{attractor} be given by \eqref {small}.
The key argument is the next lemma, establishing the equality
of the attractors $\A_\eps$ found in Theorem~\ref{attreps}.
In what follows, $\eps_*$ is given by Lemma \ref{Oliv}.

\begin{lemma}
\label{uuu}
For every $\eps < \eps_*$ sufficiently small, we have the equality
$$
\A_\eps = \A_{\eps_*}.
$$
\end{lemma}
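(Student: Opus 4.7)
The plan is to establish the two inclusions $\A_{\eps_*}\subset\A_\eps$ and $\A_\eps\subset\A_{\eps_*}$ separately, in each case by exhibiting one attractor as a fully invariant bounded subset of the invariant space hosting the other and invoking the maximality characterization of the global attractor (the Remark following the definition of attractor). The two structural tools will be the set-inclusion Lemma \ref{inclusione} and the monotonicity Lemma \ref{Oliv}.

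For the easy direction $\A_{\eps_*}\subset\A_\eps$, I impose the additional restriction $\eps\leq\eps_*(1-\omega\eps_*)$, so that Lemma \ref{inclusione} (with $\alpha$ there taken equal to $\eps$) yields $\DD_{\eps_*}\subset\DD_\eps$. Then $\A_{\eps_*}\subset\DD_{\eps_*}\subset\DD_\eps$ is a fully invariant bounded subset of $\DD_\eps$, and maximality of $\A_\eps$ in $\DD_\eps$ forces $\A_{\eps_*}\subset\A_\eps$.

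The nontrivial direction $\A_\eps\subset\A_{\eps_*}$ reduces, via the same maximality principle applied inside $\DD_{\eps_*}$, to proving the geometric claim $\A_\eps\subset\DD_{\eps_*}$. I apply Lemma \ref{Oliv} to the bounded set $\B=\A_\eps$ (bounded by Lemma \ref{inin}), obtaining a time $\textsc{T}$ and, from Step 1 of its proof, an integer $n=n(\A_\eps)$ with $\A_\eps\cap\DD_{\eps_*}^{\rm c}\subset\bigcup_{j=1}^n\mathbb{H}_j$. Step 2 of that proof in fact shows more than the bare strict decrease stated in the lemma: any $z\in\A_\eps\cap\mathbb{H}_m$ is sent by $S(\textsc{T})$ into $\{\Lambda_{\eps_*}\leq\sqrt{c_2/c_3}\,\|F\|\varrho^m/\eps_*\}$, hence into $\DD_{\eps_*}$ or into some $\mathbb{H}_{m'}$ with $m'\leq m-1$. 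Since $\A_\eps$ is fully invariant, I iterate $S(\textsc{T})$ inside $\A_\eps$; after at most $n$ iterations every orbit has descended past $\mathbb{H}_1$ into $\DD_{\eps_*}$, so $S(n\textsc{T})\A_\eps\subset\DD_{\eps_*}$, and full invariance delivers $\A_\eps=S(n\textsc{T})\A_\eps\subset\DD_{\eps_*}$.

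The main obstacle is precisely the step that forces me to look inside the proof of Lemma \ref{Oliv} rather than use only its statement: strict decrease of $\Lambda_{\eps_*}$ alone along a forward orbit in $\A_\eps\cap\DD_{\eps_*}^{\rm c}$ merely produces a monotone sequence that could in principle converge to a limit above $c_*/\eps_*$, yielding no contradiction. It is the uniform one-level descent in the filtration $\mathbb{H}_j$—a byproduct of Step 2 of Lemma \ref{Oliv}'s proof, not of its statement—that upgrades this to finite-time entry into $\DD_{\eps_*}$ and drives the whole argument.
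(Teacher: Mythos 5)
Your proposal is correct, and the overall architecture (reduce everything to the single inclusion $\A_\eps\subset\DD_{\eps_*}$ under the restriction $\eps\leq\eps_*(1-\omega\eps_*)$, then conclude by the maximality characterization in both directions via Lemma \ref{inclusione}) coincides with the paper's. Where you genuinely diverge is in the proof of $\A_\eps\subset\DD_{\eps_*}$. The paper stays at the level of the \emph{statement} of Lemma \ref{Oliv}: assuming the inclusion false, it sets $K=\sup_{z\in\A_\eps}\Lambda_{\eps_*}(z)>c_*/\eps_*$, observes that $\Lambda_{\eps_*}(S(\textsc{T})z)<K$ for every $z\in\A_\eps$ (using Lemma \ref{Oliv} on $\A_\eps\cap\DD_{\eps_*}^{\rm c}$ and the invariance of $\DD_{\eps_*}$ on the complement), and then uses the \emph{compactness} of $\A_\eps$ together with the continuity of $\Lambda_{\eps_*}$ to realize $K=\Lambda_{\eps_*}(v)$ with $v=S(\textsc{T})w$, $w\in\A_\eps$, which gives $K<K$. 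This disposes of exactly the obstacle you raise — a monotone decrease that might stall above $c_*/\eps_*$ — so your closing claim that one is \emph{forced} to open up the proof of Lemma \ref{Oliv} is not accurate; the bare statement suffices once compactness of the attractor is invoked. Your alternative, iterating $S(\textsc{T})$ and tracking the uniform one-level descent through the filtration $\mathbb{H}_j$ extracted from Step 2 of Lemma \ref{Oliv}, is nevertheless sound: the descent estimate $\Lambda_{\eps_*}(S(\textsc{T})z)\leq\sqrt{c_2/c_3}\,\|F\|\varrho^m/\eps_*$ for $z\in\mathbb{H}_m$, $m\leq n$, together with $S(\textsc{T})\A_\eps=\A_\eps$ and Proposition \ref{invariance} (to keep points in $\DD_{\eps_*}$ once they enter), does give $S(n\textsc{T})\A_\eps\subset\DD_{\eps_*}$ and hence $\A_\eps\subset\DD_{\eps_*}$. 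What your route buys is independence from compactness — boundedness and full invariance of $\A_\eps$ suffice — at the price of relying on internals of another proof rather than on a stated lemma; the paper's route is cleaner and more modular given that compactness of $\A_\eps$ is already available from Theorem \ref{attreps}.
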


\begin{proof}
Let $\eps \leq \eps_*(1-\omega\eps_*)$ be fixed (recall that $\omega\eps_*<1$).
To reach the desired conclusion, it is enough showing that
\begin{equation}
\label{keyinc}
\A_\eps \subset \DD_{\eps_*}.
\end{equation}
Indeed, if \eqref{keyinc} holds, then $\A_\eps$ turns out to be a (bounded) fully invariant subset of $\DD_{\eps_*}$, hence
contained in $\A_{\eps_*}$ which is by definition the largest fully invariant subset
of $\DD_{\eps_*}$.
Moreover by Lemma~\ref{inclusione}
$$\DD_{\eps_*} \subset \DD_{\eps}
\quad \Rightarrow \quad
\A_{\eps_*} \subset \DD_{\eps}.
$$
This means that $\A_{\eps_*}$ is a (bounded) fully invariant subset of $\DD_{\eps}$, hence
contained in $\A_{\eps}$.
Accordingly, suppose \eqref{keyinc} false. Then, by the very definition of $\DD_{\eps_*}$,
$$
K \doteq \sup_{z\in\A_\eps} \Lambda_{\eps_*}(z) > \frac{c_*}{\eps_*}.
$$
Exploiting Lemma \ref{Oliv}, there exists $\textsc{T}=\textsc{T}(\A_\eps)>0$ such that
$$
\Lambda_{\eps_*}(S(\textsc{T})z) < \Lambda_{\eps_*}(z) \leq K, \quad \forall z\in\A_\eps \cap \DD_{\eps_*}^{\rm c}.
$$
On the other hand, since $\DD_{\eps_*}$ is invariant for $S(t)$,
$$
\Lambda_{\eps_*}(S(\textsc{T})z) \leq \frac{c_*}{\eps_*} < K, \quad \forall z\in\A_\eps \cap \DD_{\eps_*}
$$
In summary,
$$
\Lambda_{\eps_*}(S(\textsc{T})z) < K, \quad \forall z \in \A_\eps .
$$
Since $\A_\eps$ is compact
and $\Lambda_{\eps_*}$ is continuous, there is $v\in\A_\eps$ for which
$$
\Lambda_{\eps_*}(v) = K.
$$
At the same time, the full invariance of $\A_\eps$ ensures that
$$
v = S(\textsc{T})w
$$
for some $w\in\A_\eps$. In conclusion,
$$
K = \Lambda_{\eps_*}(v) = \Lambda_{\eps_*}(S(\textsc{T})w) <K,
$$
leading to a contradiction.
\end{proof}

\begin{remark}
Although this is beyond our scopes, Lemma \ref{uuu} can be shown to hold for all $\eps<\eps_*$.
\end{remark}

Once Lemma \ref{uuu} is established, completing the proof of Theorem \ref{attractor} is almost straightforward.
We show that, in fact,
$$\A=\A_{\eps_*}$$
is the sought global attractor.
Being $\A_{\eps_*}$ compact and fully invariant, we just need to verify the attraction property.
To this end, let $\B\subset\H$ be a bounded set. On account of Lemma \ref{inin} and Lemma~\ref{keyinc},
there exists $\eps=\eps(\B)<\eps_*$ such that
$$
\B\subset \DD_\eps\and \A_\eps =\A_{\eps_*}.
$$
Therefore, as $\A_\eps$ is attracting on $\DD_\eps$,
$$
\lim_{t\to\infty} \boldsymbol{\delta}(S(t) \B, \A_{\eps_*}) =
\lim_{t\to\infty} \boldsymbol{\delta}(S(t) \B, \A_{\eps}) = 0.
$$
Finally, since Theorem \ref{attreps} for $\eps=\eps_*$ provides the inclusion
$$
\A_{\eps_*} \subset \mathcal{K}_{\eps_*},
$$
the claimed boundedness in $\H^1$ of the global attractor readily follows.
\qed
%%%%%%%%%%%%%%%%%%%%%%%%%%%%%%%%%%%%%%%%%%%%%%%%%

%%%%%%%%%%%%%%%%%%%%%%%%%%%%%%%%%%%%%%%%%%%%%%%%%

%%%%%%%%%%%%%%%%%%%%%%%%%%%%%%%%%%%%%%%%%%%%%%%%%
\end{document}